\pgfplotsset{compat=1.12}
\def\namedlabel#1#2{\begingroup
    #2%
    \def\@currentlabel{#2}%
    \phantomsection\label{#1}\endgroup
}
\begin{document}

\title{Long run control of nonhomogeneous Markov processes}

\author{{\L}ukasz Stettner  \footnote{Institute of Mathematics
 Polish Academy of Sciences,
  Sniadeckich 8, 00-656 Warsaw, Email: stettner@impan.pl,  research supported by National Science Centre, 
  Poland by NCN grant
  2024/53/B/ST1/00703}
}
\maketitle

\begin{abstract}In the paper average reward per unit time and average risk sensitive reward functionals are considered for controlled nonhomogeneous Markov processes. Existence of solutions to suitable Bellman equations is shown. Continuity of the value functions with respect to risk parameter is also proved. Finally stability of functionals with respect to pointwise convergence of Markov controls is studied. 
\end{abstract}

{{\bf AMS Subject Classification:} 93E20, 99J55, 90C40}

{{\bf Keywords:}
 long run risk sensitive functionals, controlled Markov process, Bellman equation,  continuity and stability of functionals} 

\theoremstyle{plain}
\setlength{\parskip}{12pt plus0pt minus12pt}

\newtheorem{theorem}{Theorem}
\newtheorem{lemma}[theorem]{Lemma}
\newtheorem{corollary}[theorem]{Corollary}
\newtheorem{proposition}[theorem]{Proposition}
\newtheorem{remark}{Remark}
\newtheorem{example}{Example}
\def\epr{\rightline $\box$}
\def\cF{\mathcal{F}}
\def\cT{\mathcal{T}}
\def\bF{\mathbb{F}}
\def\bP{\mathbb{P}}
\def\bE{\mathbb{E}}
\def\bR{\mathbb{R}}
\def\bT{\mathbb{T}}
\def\bN{\mathbb{N}}
\def\bV{\mathbb{V}}
\def\ef{\mathcal{F}}
\def\ee{\mathbb{E}}
\def\er{\mathbb{R}}
\def\bse{\mathcal{E}}
\def\prob{\mathbb{P}}
\def\tao{\tau_{\cal{O}}}
\def\om{{\cal{O}}_m}
\def\o{\cal{O}}
\def\taom{\tau_{{\cal{O}}_m}}
\def\taomk{\tau_{{\cal{O}}_{m_k}}}
\def\ep{\epsilon}
\def\vep{\varepsilon}
\def\se{{\cal{E}}}

\maketitle

\section{Introduction}\label{S:introduction}
Let $(X_n)$ be a discrete time inhomogeneous Markov process on $(\Omega, F,(F_n),\bP)$ taking values in a complete separable metric space  $E$ endowed with Borel $\sigma$ field $\se$ and with controlled transition operator $\bP_n^{a_n}(X_n,\cdot)$ in generic time $n\in \mathbb{N}$ with compact set of control parameters $U$. Denote by $C(E)$ the space of continuous bounded functions on $E$.  We shall assume so called controlled Feller property i.e. for each $n\in \mathbb{N}$ and $f\in C(E)$ the mapping $E\times U\ni (x,a)\mapsto \int_E f(y)\bP_n^a(x,dy)$ is continuous.  Markov process is controlled using sequence $V=(a_0,a_1,\ldots, a_n\ldots)$ such that $a_n\in U$ is an $F_n$ adapted random variable. We denote by $\ee_{x}^V$ expected value corresponding to controlled Markov process with the use of the sequence $V$ and starting at time $0$ from $x$. We will be interested to maximize the following long run functionals: average reward per unit time
\begin{equation}\label{fun1}
J_x(V)=\liminf_{n\to \infty} {1\over n} \ee_x^V\left[\sum_{i=0}^{n-1}c_i(X_{i},a_{i})\right]
\end{equation}
or long run risk sensitive functional
\begin{equation} \label{fun2}
J_{x}^r(V,\gamma)=\liminf_{n\to \infty}{1 \over n \gamma}  \ln\left( \ee_x^V\left[\exp\left\{\gamma\sum_{i=0}^{n-1}c_i(X_{i},a_{i})\right\}\right]\right).
\end{equation}
where $c_i:E\times U\mapsto R$ is a continuous bounded function and $\gamma\neq 0$ is a risk factor. The functional \eqref{fun2} has important financial interpretation (see \cite{PitSte2022}) since it measures not only expected value but also higher moments with weights depending on the risk factor $\gamma$.  
To solve control problems with functionals \eqref{fun1} and \eqref{fun2} we have to study Bellman equations which are of the form for $n\in \mathbb{N}_0=\mathbb{N}\cup\left\{0\right\}$.
\begin{equation}\label{Bel1}
w_n(x)=\sup_{a\in U}\left[c_n(x,a)-\lambda_n +\int_E w_{n+1}(y)\bP_n^a(x,dy)\right]
\end{equation}
\begin{equation}\label{Bel2}
\tilde{w}_n(x,\gamma)=\sup_{a\in U} \left\{c_n(x,a)-\lambda_n^\gamma+  {1\over \gamma}\ln\left[ \int_E e^{\gamma \tilde{w}_{n+1}(y,\gamma)}\bP_n^a(x,dy)\right]\right\},
\end{equation}
where we are looking for functions $w_n$, $\tilde{w}_n(\cdot,\gamma)\in C(E)$ and constants $\lambda_n$, $\lambda_n^\gamma$ respectively. As one can see solutions  $w_n$, $w_n(\cdot,\gamma)$ to \eqref{Bel1} and \eqref{Bel2} are defined up to an additive constant. Therefore we shall look for such solutions in the space $C_{sp}(E)$ which is a quotient $C(E)$ with respect to the following equivalence relation: for $v_1,v_2\in C(E)$ we have $v_1\equiv v_2$ when $\|v_1-v_2\|_{sp}:=\sup_{x,x'\in E} v_1(x)-v_2(x)-(v_1(x')-v_2(x'))=0$. To study solutions so called additive and multiplicative Poisson equations we shall need the space $B(E)$ of bounded Borel measurable functions and its quotient space $B_{sp}(E)$ (with the same equivalence relation) endowed with the above defined span norm $\|\cdot\|_{sp}$.

We have the following verification result
\begin{theorem}\label{ver}
If there exist solutions $w_n\in C(E)$ and $\lambda_n$ to \eqref{Bel1} such that $\sup_n \|w_n\|_{sp}<\infty$ and $w_n(\bar{x})=0$ for $n=0,1,\ldots$ in a fixed $\bar{x}\in E$ then
\begin{equation}
\lambda:=\sup_V J_x(V)=\liminf_{n\to \infty} {1\over n} \sum_{i=0}^{n-1}\lambda_i
\end{equation}
and controls of the form
\begin{equation}\label{verst}
\hat{V}=(u_0(X_0),\ldots,u_n(X_n),\ldots),
\end{equation}
 where $u_n$ is a Borel measurable selector to the right hand side of the equation \eqref{Bel1}, are optimal.
Furthermore if there exist solutions $\tilde{w}_n(\cdot,\gamma)\in C(E)$ and $\lambda_n^\gamma$ to \eqref{Bel2} with $\gamma\neq 0$ such that $\sup_n \|\tilde{w}_n(\cdot, \gamma\|_{sp}<\infty$ and $\tilde{w}_n(\bar{x},\gamma)=0$ for $n=0,1,\ldots$ in a fixed $\bar{x}\in E$ then
\begin{equation}
\lambda^r(\gamma):=\sup_V J_x^r(V,\gamma)=\liminf_{n\to \infty} {1\over n} \sum_{i=0}^{n-1}\lambda_i^\gamma
\end{equation}
and  controls of the form 
\begin{equation}\label{verstr}
\hat{V}^\gamma=(u_0^\gamma(X_0),\ldots,u_n^\gamma(X_n),\ldots),
\end{equation}
 where $u_n^\gamma$ is a Borel measurable selector to the right hand side of the equation \eqref{Bel2}, are optimal.
\end{theorem}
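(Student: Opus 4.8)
The plan is to establish both assertions by the classical telescoping / martingale verification argument, treating the additive pair \eqref{fun1}--\eqref{Bel1} first and then transporting it to the multiplicative pair \eqref{fun2}--\eqref{Bel2} by exponentiation.

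\emph{Additive case.} Start from the pointwise inequality contained in \eqref{Bel1}: $w_n(x)\ge c_n(x,a)-\lambda_n+\int_E w_{n+1}(y)\,\bP_n^a(x,dy)$ for every $a\in U$ and every $n$. For an arbitrary admissible control $V=(a_0,a_1,\ldots)$ apply it with $a=a_n$, take conditional expectation given $F_n$, and use the controlled Markov property $\ee_x^V[f(X_{n+1})\mid F_n]=\int_E f(y)\,\bP_n^{a_n}(X_n,dy)$ to obtain $\ee_x^V[w_n(X_n)]\ge\ee_x^V[c_n(X_n,a_n)]-\lambda_n+\ee_x^V[w_{n+1}(X_{n+1})]$. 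Summing over $n=0,\dots,N-1$ and telescoping gives $w_0(x)\ge\ee_x^V\!\big[\sum_{i=0}^{N-1}c_i(X_i,a_i)\big]-\sum_{i=0}^{N-1}\lambda_i+\ee_x^V[w_N(X_N)]$. Since $w_n(\bar x)=0$, we have $|w_n(\cdot)|\le\|w_n\|_{sp}\le M:=\sup_n\|w_n\|_{sp}<\infty$, so after dividing by $N$ the terms $\tfrac1N w_0(x)$ and $\tfrac1N\ee_x^V[w_N(X_N)]$ tend to $0$; taking $\liminf$ yields $J_x(V)\le\liminf_N\tfrac1N\sum_{i=0}^{N-1}\lambda_i$, hence $\sup_V J_x(V)\le\liminf_N\tfrac1N\sum_{i=0}^{N-1}\lambda_i$. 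For the reverse inequality, the controlled Feller property together with compactness of $U$ and continuity and boundedness of $c_n$ and $w_{n+1}$ makes $(x,a)\mapsto c_n(x,a)+\int_E w_{n+1}(y)\,\bP_n^a(x,dy)$ jointly continuous, so a Borel measurable maximizing selector $u_n$ in \eqref{Bel1} exists by a standard measurable selection theorem; running the computation above with equalities throughout for the Markov control $\hat V$ of \eqref{verst} produces $J_x(\hat V)=\liminf_N\tfrac1N\sum_{i=0}^{N-1}\lambda_i$, which closes the gap and shows $\hat V$ is optimal.

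\emph{Risk-sensitive case.} Fix $\gamma\neq0$, set $M:=\sup_n\|\tilde w_n(\cdot,\gamma)\|_{sp}$, and introduce the process $Z_N^V:=\exp\big\{\gamma\sum_{i=0}^{N-1}(c_i(X_i,a_i)-\lambda_i^\gamma)+\gamma\tilde w_N(X_N,\gamma)\big\}$. Exponentiating \eqref{Bel2} gives $e^{\gamma\tilde w_n(x,\gamma)}\ge e^{\gamma(c_n(x,a)-\lambda_n^\gamma)}\int_E e^{\gamma\tilde w_{n+1}(y,\gamma)}\,\bP_n^a(x,dy)$ for $\gamma>0$ and the reverse inequality for $\gamma<0$; applying this with $a=a_n$ and conditioning on $F_n$ shows that $(Z_N^V)_N$ is an $\ee_x^V$-supermartingale when $\gamma>0$ and an $\ee_x^V$-submartingale when $\gamma<0$, so $\ee_x^V[Z_N^V]\le e^{\gamma\tilde w_0(x,\gamma)}$ for $\gamma>0$ and $\ge$ for $\gamma<0$. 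Using $\tilde w_N(\bar x,\gamma)=0$, hence $e^{-|\gamma|M}\le e^{\gamma\tilde w_N(X_N,\gamma)}\le e^{|\gamma|M}$, to strip the terminal factor, and then applying $\tfrac1{N\gamma}\ln(\cdot)$---which reverses the inequality precisely when $\gamma<0$---one obtains in both cases $\tfrac1{N\gamma}\ln\ee_x^V\!\big[\exp\{\gamma\sum_{i=0}^{N-1}c_i(X_i,a_i)\}\big]\le\tfrac1N\sum_{i=0}^{N-1}\lambda_i^\gamma+O(1/N)$, so $J_x^r(V,\gamma)\le\liminf_N\tfrac1N\sum_{i=0}^{N-1}\lambda_i^\gamma$. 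For the Markov control $\hat V^\gamma$ of \eqref{verstr}, built from a Borel selector $u_n^\gamma$ in \eqref{Bel2} (which exists for the same reasons as above, since $y\mapsto e^{\gamma\tilde w_{n+1}(y,\gamma)}$ is bounded and continuous and the controlled Feller property is preserved), equality holds in \eqref{Bel2}, so $(Z_N^{\hat V^\gamma})$ is a genuine $\ee_x^{\hat V^\gamma}$-martingale with $\ee_x^{\hat V^\gamma}[Z_N^{\hat V^\gamma}]=e^{\gamma\tilde w_0(x,\gamma)}$; the same two-sided bound on the terminal factor then sandwiches $\tfrac1{N\gamma}\ln\ee_x^{\hat V^\gamma}\!\big[\exp\{\gamma\sum_{i=0}^{N-1}c_i(X_i,a_i)\}\big]$ between $\tfrac1N\sum_{i=0}^{N-1}\lambda_i^\gamma\pm M/N$, giving $J_x^r(\hat V^\gamma,\gamma)=\liminf_N\tfrac1N\sum_{i=0}^{N-1}\lambda_i^\gamma$ and the stated identity for $\lambda^r(\gamma)$ together with optimality of $\hat V^\gamma$.

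The routine ingredients are the Markov property for adapted controls and the elementary observations that adding a null sequence, or sandwiching a sequence between two others with a common $\liminf$, does not change the $\liminf$. The two points requiring care are the measurable selection---handled by compactness of $U$ and the controlled Feller property---and, in the risk-sensitive part, the consistent bookkeeping of $\operatorname{sgn}\gamma$ both when exponentiating \eqref{Bel2} and when dividing by $\gamma$ after taking logarithms; this sign bookkeeping is where an error is easiest to make, but it is not a genuine obstacle.
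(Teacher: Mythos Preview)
Your proof is correct and follows essentially the same verification-by-telescoping approach as the paper: in the additive case the arguments are identical, while in the risk-sensitive case you package the same computation as a super/submartingale argument for the process $Z_N^V$, whereas the paper writes it out directly as a product of ratios with explicit $\operatorname{sgn}(\gamma)$ bookkeeping. Your brief justification of the existence of the Borel measurable selectors $u_n$, $u_n^\gamma$ via the controlled Feller property and compactness of $U$ is a welcome addition that the paper's proof leaves implicit.
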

\begin{proof} For any control $V=(a_0,a_1,\ldots, a_n\ldots)$ with $i=0,1,\ldots$ we have
\begin{equation}
w_i(X_i)\geq \left[c_i(X_i,a_i)-\lambda_i +\int_E w_{i+1}(y)\bP_i^{a_i}(X_i,dy)\right]
\end{equation}
with equality for $a_i=u_i(X_i)$. Then
\begin{eqnarray}\label{verbel}
&&\sum_{i=0}^{n-1} \lambda_i \geq \sum_{i=0}^{n-1} c_i(X_i,a_i)+ \sum_{i=0}^{n-2} \left(\int_E w_{i+1}(y)\bP_i^{a_i}(X_i,dy)- w_{i+1}(X_{i+1})\right)- \nonumber \\
&&w_0(X_0)+\int_E w_n(y)\bP_{n-1}^{a_{n-1}}(X_{n-1},dy)
\end{eqnarray}
and
\begin{equation}\label{verbel1}
\sum_{i=0}^{n-1} \lambda_i \geq \ee_x^V\left[\sum_{i=0}^{n-1} c_i(X_i,a_i)-w_0(X_0)+w_n(X_n)\right] 
\end{equation}
Since $\|w_n\|:=\sup_{x\in E}|w_n(x)|\leq \|w_n\|_{sp}$ we have that $\sup_n \|w_n\|<\infty$. Therefore  dividing both sides of \eqref{verbel1} by $n$ and letting $n\to \infty$ we obtain that 
$\lambda \geq \liminf_{n\to \infty} {1\over n} \sum_{i=0}^{n-1}\lambda_i$
with equality for \eqref{verst}. 

Consider now the functional \eqref{fun2}. For any strategy $V=(a_0,a_1,\ldots, a_n\ldots)$ we have 
\begin{equation}\label{Bel2}
\tilde{w}_i(X_i,\gamma)\geq c_i(X_i,a_i)-\lambda_i^\gamma+  {1\over \gamma}\ln\left[ \int_E e^{\gamma \tilde{w}_{i+1}(y,\gamma)}\bP_i^{a_i}(X_i,dy)\right],
\end{equation}
with equality for $a_i=u_i^\gamma(X_i)$.Then
\begin{eqnarray}
&& |\gamma| \sum_{i=0}^{n-1} c_i(X_i,a_i) \geq  |\gamma|\sum_{i=0}^{n-1}\lambda_i^\gamma - sgn(\gamma)\ln \left(\prod_{i=0}^{n-2}\left[\int_E e^{\gamma(\tilde{w}_{i+1}(y,\gamma)-\tilde{w}_{i+1}(X_{i+1},\gamma))}\bP_i^{a_i}(X_i,dy)\right] \right.  \nonumber \\
&&\left. \int_E e^{\gamma \tilde{w}_n(X_{n-1},\gamma)} \bP_{n-1}^{a_{n-1}}(X_{n-1},dy)\right)+|\gamma|\tilde{w}_0(X_0,\gamma)
\end{eqnarray}
where $sgn(\gamma)=1 $ for $\gamma>0$, $sgn(\gamma)=-1$ for $\gamma<0$ and $0$ otherwise.
Then
\begin{eqnarray}\label{verbel2}
&&{1\over n \gamma} \ln \left(\ee_x^V\left[e^{\gamma\sum_{i=0}^{n-1} c_i(X_i,a_i)}\right]\right)\leq  {1\over n} \sum_{i=0}^{n-1} \lambda_i^\gamma + {1\over n}sgn(\gamma) \tilde{w}_0(X_0,\gamma) - \nonumber \\
&&{1\over n \gamma}\ln \left( \ee_x^V \left[\prod_{i=0}^{n-2}\left[\int_E e^{\gamma(\tilde{w}_{i+1}(y,\gamma)-\tilde{w}_{i+1}(X_{i+1},\gamma))}\bP_i^{a_i}(X_i,dy)\right] e^{\gamma\tilde{w}_n(X_{n},\gamma)}\right]\right)\leq \nonumber \\
&& {1\over n} \sum_{i=0}^{n-1} \lambda_i^\gamma + {1\over n}sgn(\gamma) \tilde{w}_0(X_0,\gamma) +{1\over n} \|\tilde{w}_n\|   
\end{eqnarray}
Since $\sup_n \|\tilde{w}_n\|<\infty$ letting $n\to \infty$ we obtain that $J_x^\gamma(V)\leq \liminf_{n\to \infty} {1\over n} \sum_{i=0}^{n-1} \lambda_i^\gamma$, with equality for the strategy $\hat{V}^\gamma$ defined in \eqref{verstr}. 
This completes the proof.
\end{proof}

There is a number of papers concerning long run behaviour of nonhomogeneous Markov processes (see \cite{Dob}, \cite{Ios} or recently \cite{LiuLu}).  
There are not too many works concerning control of nonhomogeneous Markov processes (see \cite{Park}, \cite{Cao}, \cite{Zheng}, \cite{Ding}, \cite{Ngoc}, \cite{Bao}). The papers \cite{Ding}, \cite{Ngoc}, \cite{Bao} consider stability of such kind models with partial observation. Control of finite or countable nonhomogeneous Markov processes with average cost per unit time was studied in \cite{Park}, \cite{Cao} (and references therein). In the paper  \cite{Zheng} general zero sum game with average criterion was considered. This paper generalizes \cite{Zheng} in various directions. First of all additionally to average reward per unit time risk sensitive functionals are studied. Continuity of value functions and functions in the Bellman equations with respects to risk parameter is shown. Finally stability of functionals with respect to pointwise convergence of Markov controls is proved, which his crucial with respect to potential approximations. 

The paper is organized as follows. We first prove a technical theorem which allows us to obtains solutions to Bellman equations both for average reward per unit time as well as long run risk sensitive functional.   Then using this theorem we find solutions to average reward per unit time and long run risk sensitive Bellman equations. For risk sensitive control we use results of \cite{DiMasi  1999} nad \cite{Ste2023}. Next we consider continuity of functions in the Bellman equations with respect to risk parameter. In the final part of the paper we show continuity of studied functionals with respect to pointwise convergence of Markov controls,  which is an extension of \cite{Ste2024}. 
   
\section{Main technical theorem}

Denote by $\|c_n\|_{sp}=\sup_{x,x'\in E} \sup_{a,a'\in U} c(x,a)-c(x',a')$. We  have
\begin{theorem}\label{mtech}

Assume we are given a sequence of operators $T_n: C_{sp}(E)\mapsto C_{sp}(E)$ such that
\begin{enumerate}
\item[({i})] $\|T_nv_1-T_nv_2\|_{sp}\leq \Delta_n \|v_1-v_2\|_{sp}$ for $v_1,v_2\in C_{sp}(E)$,
\item[({ii})] $\|T_n0\|_{sp}\leq \|c_n\|_{sp}$,
\item[({iii})]  $\lim_{k\to \infty} \Delta_n\Delta_{n+1}\ldots\Delta_{n+k}\to 0$,
\item[({iv})] $R_n:=\|c_n\|_{sp} + \sum_{i=0}^\infty \Delta_n\ldots\Delta_{n+i}\|c_{n+i+1}\|_{sp} <\infty$,
\end{enumerate}
Then there is a sequence $w_n\in C_{sp}(E)$ such that
\begin{equation}
\|T_n w_{n+1} - w_n\|_{sp}=0 \quad \mbox{for $n=0,1,\ldots$}.
\end{equation}
Moreover $\|w_n\|_{sp}\leq R_n$ and
\begin{equation}\label{impine}
\|T_nT_{n+1}\ldots T_{n+k-1}0-w_n\|_{sp}\leq  \Delta_n\ldots \Delta_{n+k-1}\|w_{n+k}\|_{sp}.
\end{equation}
Furthermore when $\sup_n \|w_n\|_{sp}<\infty$ functions $w_n$ are unique up to an additive constant.
\end{theorem}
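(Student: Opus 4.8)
The plan is to construct $w_n$ as the limit of the backward iterates
$w_n^{(k)} := T_n T_{n+1}\cdots T_{n+k-1}0$ (with $w_n^{(0)} := 0$), working throughout in $C_{sp}(E)$. This space is complete: it is the quotient of the Banach space $C(E)$ by the closed one-dimensional subspace of constant functions, and $\|\cdot\|_{sp}$ coincides (up to the factor $2$) with the quotient norm, hence is itself a Banach-space norm.

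First I would show that for each fixed $n$ the sequence $\bigl(w_n^{(k)}\bigr)_k$ is Cauchy in $C_{sp}(E)$. Writing $w_n^{(k+1)}=T_n\cdots T_{n+k-1}\bigl(T_{n+k}0\bigr)$ and $w_n^{(k)}=T_n\cdots T_{n+k-1}(0)$, $k$ successive applications of the contraction estimate (i), followed by (ii), give for $k\ge 1$
\[
\|w_n^{(k+1)}-w_n^{(k)}\|_{sp}\le \Delta_n\Delta_{n+1}\cdots\Delta_{n+k-1}\,\|T_{n+k}0\|_{sp}\le \Delta_n\cdots\Delta_{n+k-1}\,\|c_{n+k}\|_{sp},
\]
while the $k=0$ term is bounded by $\|T_n0\|_{sp}\le\|c_n\|_{sp}$. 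Summing over $k$ and re-indexing, the total of these span norms is at most $R_n$, finite by (iv); thus the telescoping series converges absolutely and $w_n^{(k)}\to w_n$ in $C_{sp}(E)$ for some $w_n$, with $\|w_n\|_{sp}=\|w_n-w_n^{(0)}\|_{sp}\le R_n$.

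Next I would verify the Poisson relation and the approximation bound. Since $T_n w_{n+1}^{(k)}=T_nT_{n+1}\cdots T_{n+k}0=w_n^{(k+1)}$, letting $k\to\infty$ and using that each $T_n$ is Lipschitz, hence continuous, by (i), the left-hand side tends to $T_n w_{n+1}$ and the right-hand side to $w_n$, so $\|T_n w_{n+1}-w_n\|_{sp}=0$. Iterating this identity gives $w_n=T_nT_{n+1}\cdots T_{n+k-1}w_{n+k}$ in $C_{sp}(E)$ for every $k$; comparing with $w_n^{(k)}=T_n\cdots T_{n+k-1}0$ through $k$ applications of (i) yields exactly \eqref{impine}. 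For uniqueness, if $w_n$ and $w_n'$ both solve the equation with $\sup_n\|w_n\|_{sp}<\infty$ and $\sup_n\|w_n'\|_{sp}<\infty$, the same iterated identity gives $\|w_n-w_n'\|_{sp}\le\Delta_n\cdots\Delta_{n+k-1}\bigl(\sup_j\|w_j\|_{sp}+\sup_j\|w_j'\|_{sp}\bigr)$, which tends to $0$ as $k\to\infty$ by (iii); hence $w_n\equiv w_n'$, i.e. they differ by an additive constant.

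The one genuinely delicate point is that there is no global contraction available: $\sup_n R_n$ may be infinite, so a single Banach fixed-point argument on the whole sequence is not possible. The construction must be anchored at each index $n$ separately, with convergence of the backward iterates controlled by (iv) and stability/uniqueness controlled by (iii). Beyond that, the argument is careful bookkeeping with the span seminorm, keeping in mind that all the identities above are equalities of equivalence classes in the quotient space $C_{sp}(E)$.
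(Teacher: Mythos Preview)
Your argument is correct and essentially the same as the paper's: both construct the iterates $w_n^{(k)}=T_nT_{n+1}\cdots T_{n+k-1}0$, show Cauchy in $C_{sp}(E)$, pass to the limit, and then read off the Poisson relation, the approximation bound \eqref{impine}, and uniqueness by iterated contraction. The only cosmetic difference is in the Cauchy step: the paper first proves the uniform bound $\|w_n^{(k)}\|_{sp}\le R_n$ by iterating (i)--(ii), and then uses $\|w_n^{(k)}-w_n^{(k+r)}\|_{sp}\le \Delta_n\cdots\Delta_{n+k-1}\|w_{n+k}^{(r)}\|_{sp}\le \Delta_n\cdots\Delta_{n+k-1}R_{n+k}$, observing that this product equals the tail of the series defining $R_n$ and hence vanishes by (iv); you instead telescope the successive differences $\|w_n^{(k+1)}-w_n^{(k)}\|_{sp}\le \Delta_n\cdots\Delta_{n+k-1}\|c_{n+k}\|_{sp}$ and sum directly to $R_n$, which is a slightly cleaner route to the same conclusion and simultaneously yields the bound $\|w_n\|_{sp}\le R_n$.
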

\begin{remark}
Sufficient condition for $\sup_n\|w_n\|_{sp}<\infty$ is that $\sup_n R_n<\infty$. When $c=\sup_n \|c_n\|_{sp}<
\infty$ and $\sup_n \Delta_n:=\Delta < 1$  then $R_n\leq c \sum_{i=0}^\infty \Delta^{i}={c\over 1-\Delta}$.
\end{remark}
\begin{proof}
For a fixed $n\in \mathbb{N}$ consider a sequence of functions $(w_n^i)$ such that $w_n^0\equiv 0$, $w_n^k(x):=T_nw_{n+1}^{k-1}(x)$ for $k=1,2, \ldots$. Then we have
\begin{equation}
w_n^k(x)=T_nw_{n+1}^{k-1}(x)=T_nT_{n+1}w_{n+2}^{k-2}(x)=\ldots=T_nT_{n+1}\ldots T_{n+k-1}w_{n+k}^{0}(x)
\end{equation}
and therefore using (i)
\begin{equation}
\|w_n^k-w_n^{k+r}|_{sp}= \|T_nT_{n+1}\ldots T_{n+k-1}0-T_nT_{n+1}\ldots T_{n+k-1}w_{n+k}^{r}\|_{sp}\leq  \Delta_n\Delta_{n+1}\ldots\Delta_{n+k-1}\|w_{n+k}^{r}\|_{sp}.
\end{equation}
By (ii) and (i) we have (taking into account that $w_n^0\equiv 0$)
\begin{equation}
\|w_n^k\|_{sp}\leq \|T_nw_{n+1}^{k-1} - T_nw_{n+1}^0\|_{sp}+\|T_nw_{n+1}^0\|_{sp}\leq\Delta_n \|w_{n+1}^{k-1}\|_{sp}+\|c_n\|_{sp}
\end{equation}
and iterating, using (iv) we obtain
\begin{eqnarray}
&& \|w_n^k\|_{sp}\leq \Delta_n \Delta_{n+1} \|w_{n+2}^{k-2}\|_{sp} + \Delta_n\|c_{n+1}\|_{sp}+ \|c_{n}\|_{sp}\nonumber \\ && \leq \ldots
\leq  \Delta_n \Delta_{n+1}\ldots \Delta_{n+k-1}\|w_{n+k}^0\|_{sp}+\sum_{i=0}^{k-2} \Delta_n \ldots \Delta_{n+i}\|c_{n+i+1}\|_{sp} + \|c_n\|_{sp}\leq R_n
\end{eqnarray}
Consequently using (iii) and (iv) we have that for each $n$ the sequence $(w_n^k)$ is bounded in the span norm and satisfies Cauchy condition. Therefore for each $n$ there is a function $w_n\in C_{sp}(E)$ such that $\|w_n^k-w_n\|_{sp}\to 0$ as $k\to \infty$.
Furthermore we have
\begin{eqnarray}
&&\|T_nw_{n+1} - w_{n}\|_{sp}\leq \|T_nw_{n+1} - T_nw_{n+1}^{k-1}\|_{sp}+ \|T_nw_{n+1}^{k-1}-w_n^k\|_{sp}+ \|w_n^k-w_n\|_{sp} \\
&& \leq \Delta_n \|w_{n+1}-w_{n+1}^{k-1}\|_{sp} + \|w_n^k-w_n\|_{sp}\to 0 \nonumber
\end{eqnarray}
as $k\to \infty$, so that $\|T_nw_{n+1} - w_{n}\|_{sp}=0$. Since $\|w_n^k\|_{sp}\leq R_n$ we also have that
$\|w_n\|_{sp}\leq R_n$. Note that using (i) we have
\begin{equation}
\|T_nT_{n+1}\ldots T_{n+k-1}0-T_nT_{n+1}\ldots T_{n+k-1}w_{n+k}\|_{sp}\leq \Delta_n\ldots \Delta_{n+k-1}\|w_{n+k}\|_{sp}.
\end{equation}
from which \eqref{impine} follows.

Assume additionally that $\sup_n \|w_n\|_{sp}<\infty$ and that there is another sequence $\tilde{w}_n\in C_{sp}(E)$ such that $\|\tilde{w}_n-w_n\|_{sp} \neq 0$ for some $n\in \mathbb{N}$ and $\|T_n\tilde{w}_n-\tilde{w}_n\|_{sp}=0$ and $\sup_n \|\tilde{w_n}\|_{sp}<\infty$. Then by (iii)
\begin{eqnarray}
&& \|w_n-\tilde{w}_n\|_{sp}=\|T_nw_{n+1}-T_n\tilde{w}_{n+1}\|_{sp}\leq\nonumber \\
&& \Delta_n \|w_{n+1}-\tilde{w}_{n+1}\|_{x
sp} = \Delta_n \|T_{n+1}w_{n+2}-T_{n+1}\tilde{w}_{n+2}\|_{sp} \nonumber \\
&&\leq\Delta_n \Delta_{n+1} \|w_{n+2}-\tilde{w}_{n+2}\|_{sp} \leq \ldots \leq \Delta_n \Delta_{n+1} \ldots  \Delta_{n+k} \|w_{n+k+1}-\tilde{w}_{n+k+1}\|_{sp}\to 0
\end{eqnarray}
as $k\to \infty$ and $\|\tilde{w}_n-w_n\|_{sp} = 0$.
This completes the proof.
\end{proof}
\begin{corollary}\label{exist}
Assume additionally that $T_n(v+d)=T_nv+d$ for $v\in C_{sp}(E)$ and constant $d\in \mathbb{R}$. Then for a given $\bar{x}\in E$ there are sequences $w_n\in C_{sp}(E)$ and constants $\lambda_n$ such that $w_n(\bar{x})=0$ and
\begin{equation}\label{poiss}
w_n(x) = -\lambda_n + T_nw_{n+1}(x) \quad \mbox{for $x\in E$}.
\end{equation}
When $\sup_n R_n <\infty$ both functions $w_n$ and constants $\lambda_n$ are unique.
\end{corollary}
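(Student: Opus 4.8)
The plan is to obtain the Corollary from Theorem~\ref{mtech} by upgrading the span-norm identities $\|T_nw_{n+1}-w_n\|_{sp}=0$ to genuine pointwise equations, the extra hypothesis $T_n(v+d)=T_nv+d$ being exactly what is needed to resolve the ambiguity inherent in working modulo constants.

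First I would apply Theorem~\ref{mtech}: its hypotheses (i)--(iv) hold, so there is a sequence $w_n\in C_{sp}(E)$ with $\|T_nw_{n+1}-w_n\|_{sp}=0$ and $\|w_n\|_{sp}\le R_n$ for all $n$. Since any two functions in a common equivalence class differ by an additive constant, each class $w_n$ contains exactly one representative vanishing at the fixed point $\bar{x}$; I replace $w_n$ by that representative, which keeps $\|w_n\|_{sp}\le R_n$ and gives $w_n(\bar{x})=0$.

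Next I would use the assumption $T_n(v+d)=T_nv+d$: it guarantees that the equivalence class of $T_nv$ depends only on the class of $v$, so applying $T_n$ to the chosen representative $w_{n+1}$ yields an honest function lying in the class denoted $T_nw_{n+1}$ in Theorem~\ref{mtech}. The identity $\|T_nw_{n+1}-w_n\|_{sp}=0$ therefore says that $x\mapsto (T_nw_{n+1})(x)-w_n(x)$ is a constant function; I call this constant $\lambda_n$. Evaluating at $\bar{x}$ and using $w_n(\bar{x})=0$ gives $\lambda_n=(T_nw_{n+1})(\bar{x})$, and rearranging gives $w_n(x)=-\lambda_n+(T_nw_{n+1})(x)$ for all $x\in E$, which is \eqref{poiss}; the bound $\|w_n\|_{sp}\le R_n$ has already been recorded.

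Finally, for uniqueness when $\sup_n R_n<\infty$: the bound $\|w_n\|_{sp}\le R_n$ then yields $\sup_n\|w_n\|_{sp}<\infty$, so the uniqueness part of Theorem~\ref{mtech} applies. If $(w_n',\lambda_n')$ is another solution of \eqref{poiss} with $w_n'(\bar{x})=0$ and $\sup_n\|w_n'\|_{sp}<\infty$, then the classes $w_n'$ satisfy $\|T_nw_{n+1}'-w_n'\|_{sp}=0$, so Theorem~\ref{mtech} gives $\|w_n-w_n'\|_{sp}=0$ for every $n$; since $w_n$ and $w_n'$ both vanish at $\bar{x}$ their difference, being constant, must be $0$, whence $w_n=w_n'$, and then \eqref{poiss} at $x=\bar{x}$ forces $\lambda_n=(T_nw_{n+1})(\bar{x})=(T_nw_{n+1}')(\bar{x})=\lambda_n'$. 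I do not expect any serious obstacle here: the whole argument is routine once Theorem~\ref{mtech} is in hand, the only delicate bookkeeping being the passage between $T_n$ viewed as a map of functions and as a map of equivalence classes, which the hypothesis $T_n(v+d)=T_nv+d$ is precisely designed to handle.
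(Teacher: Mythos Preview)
Your proposal is correct and follows essentially the same route as the paper: obtain the sequence from Theorem~\ref{mtech}, normalize each $w_n$ so that $w_n(\bar{x})=0$, use the translation property $T_n(v+d)=T_nv+d$ to turn the span-norm identity $\|T_nw_{n+1}-w_n\|_{sp}=0$ into the pointwise equation \eqref{poiss} with $\lambda_n=(T_nw_{n+1})(\bar{x})$, and invoke the uniqueness clause of Theorem~\ref{mtech} when $\sup_n R_n<\infty$. The only cosmetic difference is that the paper first extracts the constant $\hat{\lambda}_n$ before shifting and then recomputes $\lambda_n=\hat{\lambda}_n-\hat{w}_{n+1}(\bar{x})+\hat{w}_n(\bar{x})$, whereas you shift first and read off $\lambda_n$ directly; the content is identical.
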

\begin{proof} Assume that $\|\hat{w}_n-T_n\hat{w}_{n+1}\|_{sp}=0$ for $n\in \mathbb{N}_0$. let
$\hat{\lambda}_n=-(\hat{w}_n-T_n\hat{w}_{n+1})$. Define $w_n(x)=\hat{w}_n(x)-\hat{w}_n(\bar{x})$.
Then
\begin{equation}
T_nw_{n+1}(x)=T_n\hat{w}_{n+1}(x)-\hat{w}_{n+1}(\bar{x})=  \hat{w}_n(x)+\hat{\lambda}_n-\hat{w}_{n+1}(\bar{x}) =  w_n(x) + (\hat{\lambda}_n - \hat{w}_{n+1}(\bar{x})+\hat{w}_n(\bar{x}))
\end{equation}
and \eqref{poiss} is satisfied with $\lambda_n=\hat{\lambda}_n - \hat{w}_{n+1}(\bar{x})+\hat{w}_n(\bar{x})$.
Uniqueness of $w_n$ and $\lambda_n$ follows directly from Theorem \ref{mtech}.
\end{proof}

We formulate now an analog of Theorem \ref{mtech} and Corollary \ref{exist} (with almost the same proofs) for the case when the operator $T_n$ transforms the space $B_{sp}(E)$ into itself. We have
\begin{proposition}\label{mtechp}
Assume there is a sequence of operators $T_n:B_{sp}(E)\mapsto B_{sp}(E)$ for which assumptions (i)-(iv) of Theorem \ref{mtech} and additionally property $T_n(v+d)=T_nv+d$ for $v\in B_{sp}(E)$ and constant $d\in \mathbb{R}$ are satisfied. Then for a given $\bar{x}\in E$ there are sequences $w_n\in B_{sp}(E)$ and constants $\lambda_n$ such that $w_n(\bar{x})=0$ and
\begin{equation}\label{poissn}
w_n(x) = -\lambda_n + T_nw_{n+1}(x) \quad \mbox{for $x\in E$}.
\end{equation}
Furthermore $\|w_n\|_{sp}\leq R_n$ and
\begin{equation}\label{impinen}
\|T_nT_{n+1}\ldots T_{n+k-1}0-w_n\|_{sp}\leq  \Delta_n\ldots \Delta_{n+k-1}\|w_{n+k}\|_{sp}.
\end{equation}
When $\sup_n R_n <\infty$ both functions $w_n$ and constants $\lambda_n$ are unique.
\end{proposition}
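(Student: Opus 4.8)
The plan is to repeat, almost verbatim, the arguments of Theorem~\ref{mtech} and Corollary~\ref{exist}. Those proofs never used continuity of the functions involved: the operators $T_n$ enter only through the abstract properties (i)--(iv) and the affine identity $T_n(v+d)=T_nv+d$, and the ambient space enters only through its completeness in the span norm. Hence the single point that genuinely needs checking is that $B_{sp}(E)$, equipped with $\|\cdot\|_{sp}$, is a Banach space. For this, fix $\bar{x}\in E$ and identify each class in $B_{sp}(E)$ with its representative $v\in B(E)$ satisfying $v(\bar{x})=0$; on this set one has $|v(x)|=|v(x)-v(\bar{x})|\le\|v\|_{sp}\le 2\|v\|$, so $\|\cdot\|_{sp}$ and the sup norm $\|\cdot\|$ are equivalent there. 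Since $\{v\in B(E):v(\bar{x})=0\}$ is a closed subspace of the sup-norm Banach space $B(E)$, it is complete, and therefore $B_{sp}(E)$ is complete in the span norm.

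Granting this, I would proceed exactly as in the proof of Theorem~\ref{mtech}. For each fixed $n$ set $w_n^0\equiv 0$ and $w_n^k:=T_nw_{n+1}^{k-1}$ for $k\ge 1$, so that $w_n^k=T_nT_{n+1}\cdots T_{n+k-1}0$. Property (i) gives $\|w_n^k-w_n^{k+r}\|_{sp}\le\Delta_n\cdots\Delta_{n+k-1}\|w_{n+k}^r\|_{sp}$, while (i), (ii), (iv) give by iteration $\|w_n^k\|_{sp}\le R_n$ for all $k$; together with (iii) this shows that $(w_n^k)_k$ is Cauchy in the complete space $B_{sp}(E)$, hence converges to some $w_n\in B_{sp}(E)$ with $\|w_n\|_{sp}\le R_n$. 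A three-term triangle inequality, copied from Theorem~\ref{mtech}, yields $\|T_nw_{n+1}-w_n\|_{sp}=0$, and applying (i) to $w_n^k$ against $T_n\cdots T_{n+k-1}w_{n+k}$ gives \eqref{impinen}.

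To pass from this span-norm fixed point to the Poisson equation \eqref{poissn} I would run the argument of Corollary~\ref{exist}: writing $\hat{\lambda}_n:=-(w_n-T_nw_{n+1})$, which is a constant since $\|T_nw_{n+1}-w_n\|_{sp}=0$, replacing $w_n$ by $w_n-w_n(\bar{x})$, and using $T_n(v+d)=T_nv+d$ to conclude that the normalized functions satisfy $w_n(x)=-\lambda_n+T_nw_{n+1}(x)$ with $\lambda_n=\hat{\lambda}_n-w_{n+1}(\bar{x})+w_n(\bar{x})$ and $w_n(\bar{x})=0$. Finally, when $\sup_nR_n<\infty$ the bound $\|w_n\|_{sp}\le R_n$ forces $\sup_n\|w_n\|_{sp}<\infty$, so if $\tilde{w}_n\in B_{sp}(E)$ is another bounded solution, iterating $\|w_n-\tilde{w}_n\|_{sp}\le\Delta_n\cdots\Delta_{n+k}\|w_{n+k+1}-\tilde{w}_{n+k+1}\|_{sp}$ and using (iii) gives $\|w_n-\tilde{w}_n\|_{sp}=0$; uniqueness of the $\lambda_n$ then follows since each $\lambda_n$ is determined by the now unique classes $w_n,w_{n+1}$ and the normalization at $\bar{x}$.

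I expect no real obstacle beyond bookkeeping: the only substantive remark is the completeness of $B_{sp}(E)$, after which every estimate is literally the one already performed for $C_{sp}(E)$. The one mild subtlety worth stating carefully is the passage from the span-norm statement $\|T_nw_{n+1}-w_n\|_{sp}=0$ to the pointwise equation \eqref{poissn}, which is bridged solely by the affine property, exactly as in Corollary~\ref{exist}.
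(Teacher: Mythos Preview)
Your proposal is correct and follows precisely the route the paper indicates: the paper merely states that the proof is ``almost the same'' as that of Theorem~\ref{mtech} and Corollary~\ref{exist}, and you carry this out faithfully. Your explicit verification that $B_{sp}(E)$ is complete in the span norm is a welcome detail the paper leaves implicit, but otherwise the argument is identical.
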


\section{Applications}

Let for $v\in C(E)$
\begin{equation}\label{opT}
T_nv(x) := \sup_{a\in U} \left[c(x,a)+\int_E v(y)\bP_n^a(x,dy)\right]
\end{equation}
We have
\begin{lemma}
For each $n\in \mathbb{N}$ the operator $T_n$ transforms $C(E)$ into $C(E)$ and for $v_1,v_2\in C(E)$ we have
\begin{equation}
\|T_n v_1 - T_nv_2\|_{sp}\leq \Delta_n \|v_1-v_2\|_{sp},
\end{equation}
with
\begin{equation}\label{defD}
\Delta_n:=\sup_{x,x'\in E} \sup_{a,a'\in U} \sup_{B\in \se} \bP_n^a(x,B)-\bP_n^{a'}(x',B).
\end{equation}
\end{lemma}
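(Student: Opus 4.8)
The plan is to establish the two assertions in turn: that $T_n$ maps $C(E)$ into itself, and then the span-norm Lipschitz bound; the latter will also re-confirm that $T_n$ is well defined on the quotient $C_{sp}(E)$ and that it descends to $C_{sp}(E)$ as asserted earlier.

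For the first assertion, fix $v\in C(E)$. By the controlled Feller property the map $(x,a)\mapsto\int_E v(y)\,\bP_n^a(x,dy)$ is continuous on $E\times U$, hence, $c_n$ being continuous and bounded, $g(x,a):=c_n(x,a)+\int_E v(y)\,\bP_n^a(x,dy)$ is continuous and bounded on $E\times U$. Since $U$ is compact, I would invoke the standard fact that $x\mapsto\sup_{a\in U}g(x,a)$ is then continuous and bounded. Concretely: it is a supremum of the continuous functions $g(\cdot,a)$, hence lower semicontinuous; for upper semicontinuity, if $x_k\to x$ choose $a_k$ attaining $\sup_{a}g(x_k,a)$ (possible by compactness and continuity), pass to a subsequence along which $a_k\to a^\ast$, and use joint continuity to get $\sup_a g(x_k,a)=g(x_k,a_k)\to g(x,a^\ast)\le\sup_a g(x,a)$ along that subsequence; boundedness follows from $|g|\le\|c_n\|+\|v\|$. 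Thus $T_nv\in C(E)$.

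For the span-norm estimate, fix $x,x'\in E$ and $\varepsilon>0$, and bound the quantity $D:=\bigl(T_nv_1(x)-T_nv_2(x)\bigr)-\bigl(T_nv_1(x')-T_nv_2(x')\bigr)$. Pick $a\in U$ with $T_nv_1(x)\le c_n(x,a)+\int_E v_1\,d\bP_n^a(x,\cdot)+\varepsilon$ and $a'\in U$ with $T_nv_2(x')\le c_n(x',a')+\int_E v_2\,d\bP_n^{a'}(x',\cdot)+\varepsilon$, and estimate $T_nv_2(x)$ and $T_nv_1(x')$ from below by the values at $a$ and at $a'$ respectively. Subtracting, the four cost terms cancel in pairs, leaving
\[
D\le\int_E(v_1-v_2)\,d\mu+2\varepsilon,\qquad\mu:=\bP_n^a(x,\cdot)-\bP_n^{a'}(x',\cdot).
\]
Here $\mu$ is a difference of two probability measures, so $\mu(E)=0$; by the Hahn--Jordan decomposition $\mu=\mu^+-\mu^-$ with $\mu^+(E)=\mu^-(E)=\sup_{B\in\se}\mu(B)=:\delta$, and writing $h:=v_1-v_2$,
\[
\int_E h\,d\mu=\int_E h\,d\mu^+-\int_E h\,d\mu^-\le\delta\bigl(\sup_E h-\inf_E h\bigr)=\delta\,\|h\|_{sp}.
\]
Since $\delta\le\Delta_n$ by \eqref{defD}, this gives $D\le\Delta_n\|v_1-v_2\|_{sp}+2\varepsilon$; taking the supremum over $x,x'$ and letting $\varepsilon\downarrow0$ yields the claimed inequality.

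The only point requiring care — there is no genuine obstacle here — is the correct bookkeeping of the double supremum defining $\|\cdot\|_{sp}$: one must use a (near-)maximizing control for the term $T_nv_1(x)$ and a \emph{separate} one for $T_nv_2(x')$, estimating the two ``cross'' terms $T_nv_2(x)$ and $T_nv_1(x')$ from below, so that all cost terms cancel exactly and a single zero-mass signed measure $\mu$ survives. After that it is only a matter of recalling that $\|h\|_{sp}$ equals the oscillation $\sup_E h-\inf_E h$ and that integration against the difference of two probability measures is controlled by the Dobrushin-type coefficient $\Delta_n$ in \eqref{defD}.
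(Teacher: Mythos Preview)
Your proof is correct and follows essentially the same route as the paper's: both arguments reduce $D$ to an integral of $v_1-v_2$ against the signed measure $\mu=\bP_n^a(x,\cdot)-\bP_n^{a'}(x',\cdot)$ and then invoke the Hahn decomposition to bound it by $\Delta_n\|v_1-v_2\|_{sp}$; the only cosmetic difference is that the paper uses the inequality $\sup f-\sup g\le\sup(f-g)$ directly rather than your explicit $\varepsilon$-optimizers. Your write-up is in fact more complete, since you also supply the $C(E)\to C(E)$ argument via joint continuity and compactness of $U$, which the paper's proof does not spell out.
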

\begin{proof}
For $v_1,v_2\in C(E)$ and $x_1,x_2\in E$ we have 
\begin{eqnarray}
&&Tv_1(x_1)-Tv_2(x_1)-(Tv_1(x_2)-Tv_2(x_2))\leq \sup_{a\in U}\int_E (v_1(y)-v_2(y)) \bP_n^a(x_1,dy) - \nonumber \\
&&\inf_{a\in U} \int_E (v_1(y)-v_2(y)) \bP_n^a(x_2,dy)= \sup_{a,a'\in U} \int_E (v_1(y)-v_2(y)) (\bP_n^a(x_1,dy) - \bP_n^{a'}(x_2,dy))\leq  \nonumber \\
&&\sup_{a,a'\in U} \left[\sup_y (v_1(y)-v_2(y))(\bP_n^a(x_1,D(x_1,x_2,a,a')) - \right. \nonumber \\ &&\bP_n^{a'}(x_2,D(x_1,x_2,a,a')))-\inf_y (v_1(y)-v_2(y)) \nonumber \\
&&\left. (\bP_n^a(x_1,D^c(x_1,x_2,a,a')) - \bP_n^{a'}(x_2,D^c(x_1,x_2,a,a')))\right]\leq \|v_1-v_2\|_{sp} \Delta_n  
\end{eqnarray}
where the set $D(x_1,x_2,a,a')$ is from Hahn decomposition theorem of the measure $\bP_n^a(x_1,dy) - \bP_n^{a'}(x_2,dy)$. 
\end{proof}
\begin{theorem}\label{averp}
Assume that $\Delta_n$ defined in \eqref{defD} satisfies (iii) of Theorem \ref{mtech} and $\sup_n R_n<\infty$. Then there exist unique solutions: functions  $w_n\in C(E)$ such that for a fixed $\bar{x}\in E$ we have  $w_n(\bar{x})=0$ and constants $\lambda_n$  to the equation \eqref{Bel1}.
\end{theorem}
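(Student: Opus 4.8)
The plan is to derive this as a direct application of Corollary \ref{exist} to the operator $T_n$ defined in \eqref{opT}, so the entire task reduces to checking that $T_n$ meets the structural hypotheses (i)--(iv) of Theorem \ref{mtech} together with the translation-invariance condition of the Corollary. First I would observe that $T_n$ is well defined as a map $C_{sp}(E)\to C_{sp}(E)$: the preceding Lemma already shows $T_n$ carries $C(E)$ into $C(E)$ and satisfies $\|T_nv_1-T_nv_2\|_{sp}\le\Delta_n\|v_1-v_2\|_{sp}$, which is hypothesis (i); and since
\[
T_n(v+d)(x)=\sup_{a\in U}\Big[c_n(x,a)+\int_E v(y)\bP_n^a(x,dy)+d\Big]=T_nv(x)+d,
\]
the operator respects the equivalence relation defining $C_{sp}(E)$ and fulfils the extra hypothesis of Corollary \ref{exist}. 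This last identity is also the only point one must be slightly careful about, since it is what guarantees that the abstract machinery is applicable to the quotient space at all.

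Next I would verify (ii)--(iv). For (ii), $T_n0(x)=\sup_{a\in U}c_n(x,a)$, so
\[
\|T_n0\|_{sp}=\sup_{x,x'\in E}\Big(\sup_{a\in U}c_n(x,a)-\sup_{a'\in U}c_n(x',a')\Big)\le\sup_{x,x'\in E}\sup_{a,a'\in U}\big(c_n(x,a)-c_n(x',a')\big)=\|c_n\|_{sp}.
\]
Hypothesis (iii) holds by assumption, and (iv) is exactly the assertion $R_n<\infty$, which is subsumed by the hypothesis $\sup_nR_n<\infty$.

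With (i)--(iv) and translation invariance established, Corollary \ref{exist} yields, for the fixed $\bar x\in E$, sequences $w_n\in C_{sp}(E)$ and constants $\lambda_n$ with $w_n(\bar x)=0$ and $w_n(x)=-\lambda_n+T_nw_{n+1}(x)$; unwinding the definition of $T_n$, this rearranges precisely to \eqref{Bel1}. Because $T_n$ maps $C(E)$ into $C(E)$, the representative of $w_n$ normalized to vanish at $\bar x$ genuinely lies in $C(E)$, so the solution is as claimed. Finally, the bound $\|w_n\|_{sp}\le R_n$ from Theorem \ref{mtech} gives $\sup_n\|w_n\|_{sp}\le\sup_nR_n<\infty$, so the uniqueness clause of Corollary \ref{exist} applies and both $w_n$ and $\lambda_n$ are unique. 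I do not expect any real obstacle here: the two substantive ingredients --- the span-norm contraction estimate for $T_n$ and the fixed-point construction --- were already carried out in the Lemma and in Theorem \ref{mtech}, and what remains is the routine bookkeeping of matching hypotheses and passing from the quotient-space solution to a continuous representative.
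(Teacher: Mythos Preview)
Your proof is correct and follows exactly the same approach as the paper: verify that the operator $T_n$ from \eqref{opT} satisfies hypotheses (i)--(iv) of Theorem~\ref{mtech} together with translation invariance, and then invoke Corollary~\ref{exist}. The paper's own proof is simply a terser version of what you wrote, asserting (i), (ii) and $T_n(v+d)=T_nv+d$ without writing out the verifications you supplied.
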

\begin{proof}
Notice that operator $T_n$ satisfies properties (i) and (ii) of Theorem \ref{mtech} and also $T_n(v+d)=T_nv+d$ for $v\in C(E)$ and constant $d\in \mathbb{R}$. Therefore we can use Corollary \ref{exist} which completes the proof.
\end{proof}
For $v(\cdot,\gamma)\in C(E)$ with fixed $\gamma\neq 0$ define the operator
\begin{equation}\label{Optr}
\tilde{T}_n v(x,\gamma):= \sup_{a\in U} \left[c_n(x,a)+{1\over \gamma}\ln\int_E e^{\gamma v(y,\gamma)}\bP_n^a(x,dy)\right]
\end{equation}
We have
\begin{lemma}\label{loccon}
For each $n\in \mathbb{N}$ and fixed $\gamma\neq 0$ the operator $\tilde{T}_n$ transforms $C(E)$ into $C(E)$ and for $v_1,v_2\in C(E)$ we have
\begin{equation}\label{est1r}
\|\tilde{T}_n v_1(\cdot,\gamma) - \tilde{T}_nv_2(\cdot,\gamma)\|_{sp}\leq \delta_n(|\gamma|(\|v_1\|_{sp}\vee \|v_2\|_{sp})) \|v_1(\cdot,\gamma)-v_2(\cdot,\gamma)\|_{sp},
\end{equation}
with
\begin{equation}\label{defDr}
\delta_n(|\gamma|(\|v_1\|_{sp}\vee \|v_2\|_{sp})):=\sup_{x,x'\in E} \sup_{a,a'\in U} \sup_{B\in \se} \nu_{x,a,\gamma v_1}^n(B)-\nu_{x',a',\gamma v_2}^n(B),
\end{equation}
where
\begin{equation}\label{defDr1}
\nu_{x,a,\gamma v_1}^n(B)={\int_B e^{\gamma v_1(y,\gamma)}\bP_n^a(x,dy) \over \int_E e^{\gamma v_1(y,\gamma)}\bP_n^a(x,dy)}.
\end{equation}
\end{lemma}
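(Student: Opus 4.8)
The plan is to mimic the proof of the previous lemma concerning the linear operator $T_n$, replacing the signed measure $\bP_n^a(x_1,\cdot)-\bP_n^{a'}(x_2,\cdot)$ by the signed measure $\nu_{x_1,a,\gamma v_1}^n-\nu_{x_2,a',\gamma v_2}^n$ built from the normalized tilted kernels in \eqref{defDr1}. First I would record that $\tilde T_n$ maps $C(E)$ into $C(E)$: since $c_n$ is continuous and bounded and $v(\cdot,\gamma)$ is bounded, the controlled Feller property applied to the bounded continuous function $y\mapsto e^{\gamma v(y,\gamma)}$ shows that $(x,a)\mapsto\int_E e^{\gamma v(y,\gamma)}\bP_n^a(x,dy)$ is continuous and bounded away from $0$ (it is $\ge e^{-|\gamma|\,\|v\|}$), so $(x,a)\mapsto c_n(x,a)+\frac1\gamma\ln\int_E e^{\gamma v(y,\gamma)}\bP_n^a(x,dy)$ is continuous on $E\times U$; continuity of the supremum over the compact set $U$ then follows by a standard argument, giving $\tilde T_nv(\cdot,\gamma)\in C(E)$.

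The heart of the matter is the span estimate \eqref{est1r}. Fix $x_1,x_2\in E$ and consider
\[
\tilde T_nv_1(x_1,\gamma)-\tilde T_nv_2(x_1,\gamma)-\bigl(\tilde T_nv_1(x_2,\gamma)-\tilde T_nv_2(x_2,\gamma)\bigr).
\]
As in the linear case I would bound the first bracket from above by taking the supremum over $a$ and the second from below by the infimum over $a$, so that after choosing for each pair a common index one is led to
\[
\sup_{a,a'\in U}\Bigl[\tfrac1\gamma\ln\!\int_E e^{\gamma v_1(y,\gamma)}\bP_n^a(x_1,dy)-\tfrac1\gamma\ln\!\int_E e^{\gamma v_2(y,\gamma)}\bP_n^a(x_1,dy)-(\text{same at }x_2,a')\Bigr].
\]
The key algebraic observation is that $\frac1\gamma\ln\int e^{\gamma v_1}d\bP-\frac1\gamma\ln\int e^{\gamma v_2}d\bP$ can be written, via the fundamental theorem of calculus in the parameter $t$ along the segment $v_t=tv_1+(1-t)v_2$, as $\int_0^1\!\int_E (v_1(y,\gamma)-v_2(y,\gamma))\,\mu_t(dy)\,dt$ where $\mu_t$ is the normalized tilted probability measure $\mu_t(dy)=e^{\gamma v_t(y,\gamma)}\bP/\int e^{\gamma v_t}d\bP$; this is exactly a convex-combination version of the measures in \eqref{defDr1}. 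Hence the displayed difference becomes an integral over $t$ of $\int_E(v_1-v_2)\,d(\mu_t^{x_1,a}-\mu_t^{x_2,a'})$, and by the Hahn decomposition of the signed measure $\mu_t^{x_1,a}-\mu_t^{x_2,a'}$ one estimates this by $\|v_1(\cdot,\gamma)-v_2(\cdot,\gamma)\|_{sp}$ times the total-variation-type quantity $\sup_{B\in\se}\bigl(\mu_t^{x_1,a}(B)-\mu_t^{x_2,a'}(B)\bigr)$, uniformly in $t$, $x_1,x_2,a,a'$. Since $v_t$ satisfies $\|v_t\|\le\|v_1\|\vee\|v_2\|$ (actually $\|v_t\|\le$ the max of the two sup-norms, controlled by the span norms up to the normalization $v(\bar x,\gamma)=0$), each $\mu_t$ is a tilted kernel with tilting function bounded by $|\gamma|(\|v_1\|_{sp}\vee\|v_2\|_{sp})$, so the supremum over $t$ is dominated by $\delta_n(|\gamma|(\|v_1\|_{sp}\vee\|v_2\|_{sp}))$ as defined in \eqref{defDr}. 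Interchanging $x_1$ and $x_2$ gives the matching lower bound and yields \eqref{est1r}.

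The main obstacle I anticipate is a bookkeeping one rather than a conceptual one: the definition \eqref{defDr} compares $\nu_{x,a,\gamma v_1}^n$ against $\nu_{x',a',\gamma v_2}^n$ — i.e. with $v_1$ on one side and $v_2$ on the other — whereas the integral representation naturally produces the intermediate tiltings $\mu_t$ with $v_t$ on both sides. I would reconcile this by noting that any $\mu_t^{x,a}$ is itself of the form $\nu_{x,a,\gamma v}^n$ for an admissible $v$ with $\|v\|$ bounded as above, so $\mu_t^{x_1,a}(B)-\mu_t^{x_2,a'}(B)\le\delta_n(\cdot)$ holds by the very supremum defining $\delta_n$ (one may even take $v_1=v_2=v_t$ there); the apparent asymmetry in \eqref{defDr} only makes the constant larger, so the bound still goes through. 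A secondary point is justifying that the tilting exponents are controlled by the span norms rather than the sup-norms: this uses the normalization $v(\bar x,\gamma)=0$ built into the setup of \eqref{Bel2}, under which $\|v\|\le\|v\|_{sp}$, and which will be in force whenever this lemma is applied.
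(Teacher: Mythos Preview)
Your argument is correct but takes a genuinely different route from the paper. The paper invokes the Donsker--Varadhan variational formula (1.15) in \cite{Dupuis}: for $\gamma>0$ one has $\tfrac1\gamma\ln\int e^{\gamma v}\,dP=\sup_\mu\bigl[\int v\,d\mu-\tfrac1\gamma H(\mu\mid P)\bigr]$, with the supremum attained at the tilted measure $\nu_{x,a,\gamma v}^n$, and the analogous $\inf$-formula for $\gamma<0$. From this one gets directly
\[
\tfrac1\gamma\ln\!\int e^{\gamma v_1}\,dP-\tfrac1\gamma\ln\!\int e^{\gamma v_2}\,dP\le\int(v_1-v_2)\,d\nu_{x,a,\gamma v_1}^n,\qquad
\ge\int(v_1-v_2)\,d\nu_{x,a,\gamma v_2}^n,
\]
so the span estimate reduces to the Hahn decomposition of $\nu_{x_1,a,\gamma v_1}^n-\nu_{x_2,a',\gamma v_2}^n$, which is precisely the asymmetric pair appearing in \eqref{defDr}. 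Your fundamental-theorem-of-calculus argument along $v_t=tv_1+(1-t)v_2$ is more elementary (no relative-entropy duality, and the two signs of $\gamma$ are handled uniformly), but it produces the intermediate tilted measures $\mu_t=\nu_{\cdot,\cdot,\gamma v_t}^n$ rather than the endpoint pair. As you noticed, this forces you to read $\delta_n(r)$ as a bound valid for \emph{all} tiltings with exponent of span $\le r$; the variational approach explains why the paper can write \eqref{defDr} with the specific $v_1$ and $v_2$ on opposite sides and still legitimately view $\delta_n$ as a function of $|\gamma|(\|v_1\|_{sp}\vee\|v_2\|_{sp})$ alone.
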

\begin{proof}
We follow the proof of Theorem 5 of \cite{Ste2023}. We need variational formula (1.15) of \cite{Dupuis}  for the operator $\tilde{T}_n$ separately in the case of $\gamma>0 $ and $\gamma<0$. In both cases we obtain \eqref{est1r} with $\delta_n$ defined in \eqref{defDr} and measures $\nu^n$ defined in \eqref{defDr1}.
\end{proof}

To show that operator $\tilde{T}_n$ defined in \eqref{Optr} satisfies properties required in Theorem \ref{mtech} we shall need additional assumption for $n\in \mathbb{N}$
\begin{equation}\label{eqn}
\sup_{x,x'\in E} \sup_{a\in U} \sup_{B\in \se} {\bP_n^a(x,B) \over \bP_n^a(x',B)}:=K_n<\infty
\end{equation}
Using the proof of Proposition 6 of \cite{Ste2023} we obtain
\begin{lemma}\label{defDrr}
Under \eqref{eqn} for $v(\cdot,\gamma)\in C_{sp}(E)$ and $\gamma\neq 0$  we have
\begin{equation}\label{estt}
\|\tilde{T}_nv(\cdot,\gamma)\|_{sp}\leq \|c_n\|_{sp} + {1\over |\gamma|} \ln K_n
\end{equation}
and consequently for fixed $\gamma\neq 0$ the operator $\tilde{T}_n$ is contraction in $C_{sp}(E)$ with constant
$\Delta_n^\gamma:=\delta(|\gamma|\|c_n\|_{sp} + \ln K_n)$
\end{lemma}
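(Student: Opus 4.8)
The plan is to prove the span bound \eqref{estt} by a direct estimate of $\tilde{T}_n v(x,\gamma)-\tilde{T}_n v(x',\gamma)$ for arbitrary $x,x'\in E$, and then to read off the contraction statement by feeding \eqref{estt} into the local estimate \eqref{est1r} of Lemma \ref{loccon}.

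For \eqref{estt}, I would fix $x,x'\in E$ and use the elementary inequality $\sup_{a}(f(a)+g(a))\le\sup_a f(a)+\sup_a g(a)$ to bound
\begin{equation}
\tilde{T}_n v(x,\gamma)-\tilde{T}_n v(x',\gamma)\le \sup_{a\in U}\bigl(c_n(x,a)-c_n(x',a)\bigr)+\sup_{a\in U}\frac1\gamma\ln\frac{\int_E e^{\gamma v(y,\gamma)}\bP_n^a(x,dy)}{\int_E e^{\gamma v(y,\gamma)}\bP_n^a(x',dy)}.
\end{equation}
The first supremum is at most $\|c_n\|_{sp}$ by definition of that norm. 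For the second, the assumption \eqref{eqn} is symmetric in $x,x'$, so it gives $\bP_n^a(x,B)\le K_n\bP_n^a(x',B)$ and $\bP_n^a(x',B)\le K_n\bP_n^a(x,B)$ for all $B\in\se$; hence $\bP_n^a(x,\cdot)$ and $\bP_n^a(x',\cdot)$ are mutually absolutely continuous with Radon--Nikodym densities bounded by $K_n$. Since $e^{\gamma v}\ge0$, integrating against these densities shows that $r(a):=\int_E e^{\gamma v(y,\gamma)}\bP_n^a(x,dy)\big/\int_E e^{\gamma v(y,\gamma)}\bP_n^a(x',dy)$ lies in $[1/K_n,K_n]$, whence $\frac1\gamma\ln r(a)\le\frac1{|\gamma|}|\ln r(a)|\le\frac1{|\gamma|}\ln K_n$ irrespective of the sign of $\gamma$. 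Adding the two bounds and taking $\sup_{x,x'}$ yields \eqref{estt}.

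For the contraction, I would substitute \eqref{estt} into \eqref{est1r}. Because $\delta_n(\cdot)$ is nondecreasing and, by \eqref{estt}, every function in the range of $\tilde{T}_n$ has span at most $\|c_n\|_{sp}+\frac1{|\gamma|}\ln K_n$, in the iteration of Theorem \ref{mtech} (where $\tilde{T}_n$ is applied only to the zero function and to images of $\tilde{T}_{n+1}$) the quantity $\|v_1\|_{sp}\vee\|v_2\|_{sp}$ appearing in \eqref{est1r} may be replaced by such a fixed bound; this yields the $v$-independent constant $\Delta_n^\gamma=\delta_n(|\gamma|\|c_n\|_{sp}+\ln K_n)$ (up to the harmless one-step index shift produced by the iteration). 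One then checks that this $\Delta_n^\gamma$ fulfils hypotheses (iii)--(iv) of Theorem \ref{mtech} in order to produce solutions of \eqref{Bel2}.

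The supremum manipulations and the passage through $\ln$ are routine; the step that really uses the hypothesis is the uniform-in-$v$ span bound \eqref{estt}, and there \eqref{eqn} is indispensable: in contrast with the additive case, a bound on $\Delta_n$ alone does not control $\|\tilde{T}_nv\|_{sp}$, and the mutual absolute continuity with bounded densities supplied by \eqref{eqn} is exactly the ingredient that compensates. I expect the only slightly delicate point to be the sign bookkeeping when $\gamma<0$, which is disposed of cleanly by the inequality $\frac1\gamma\ln r\le\frac1{|\gamma|}|\ln r|$.
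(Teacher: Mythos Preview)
Your proof is correct and matches the paper's argument essentially line for line: the paper bounds $\tilde{T}_nv(x_1,\gamma)-\tilde{T}_nv(x_2,\gamma)$ by $\sup_{a}\bigl[c_n(x_1,a)-c_n(x_2,a)+\tfrac{1}{\gamma}\ln\tfrac{\int e^{\gamma v}\bP_n^a(x_1,dy)}{\int e^{\gamma v}\bP_n^a(x_2,dy)}\bigr]$ and then invokes \eqref{eqn} exactly as you do, and for the contraction it simply says the claim ``follows directly from Lemma~\ref{loccon}''. Your write-up is in fact a bit more careful than the paper's---you spell out the Radon--Nikodym argument for the ratio, handle the sign of $\gamma$ explicitly, and flag the index shift hidden in the phrase ``contraction in $C_{sp}(E)$''---but the underlying route is identical.
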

\begin{proof}
For $x_1,x_2\in E$ and $v(\cdot,\gamma)\in C(E)$ we have
\begin{eqnarray}
&&\tilde{T}_nv(x_1,\gamma)-\tilde{T}_nv(x_2,\gamma)\leq \sup_{a\in U}\left[ c_n(x_1,a)-c_n(x_2,a)+ \right. \nonumber \\
&&\left. {1\over \gamma} ln\left({\int_E v(y,\gamma) \bP_n^a(x_1,dy)\over \int_E v(y,\gamma) \bP_n^a(x_2,dy)}\right)\right]\leq  \|c_n\|_{sp} + {1\over |\gamma|} \ln K_n
\end{eqnarray}
which completes the proof of \eqref{estt}. The second part follows directly from Lemma \ref{loccon}.  
\end{proof}

\begin{theorem}\label{averpr}
Assume that $\Delta_n^\gamma$ defined in Lemma \ref{defDrr} satisfies (iii) of Theorem \ref{mtech} and $\sup_n R_n<\infty$. Then for fixed $\gamma\neq 0$  there exist unique: functions  $\tilde{w}_n(\cdot,\gamma)\in C(E)$ such that for a fixed $\bar{x}\in E$ we have  $\tilde{w}_n(\bar{x},\gamma)=0$ and constants $\lambda_n^\gamma$ which are solutions to the equation \eqref{Bel2}.
\end{theorem}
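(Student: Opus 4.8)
The proof will follow the pattern of Theorem \ref{averp}: the plan is to verify that the operator $\tilde T_n$ of \eqref{Optr} meets the hypotheses of Corollary \ref{exist} for the fixed $\gamma\neq0$, and then read off the conclusion. By Lemma \ref{loccon}, $\tilde T_n$ maps $C(E)$ into $C(E)$. Property (ii) of Theorem \ref{mtech} is immediate: since $\bP_n^a(x,E)=1$ one has $\tilde T_n0(x)=\sup_{a\in U}c_n(x,a)$, whence $\|\tilde T_n0\|_{sp}\le\|c_n\|_{sp}$. The extra hypothesis of Corollary \ref{exist}, $\tilde T_n(v+d)=\tilde T_nv+d$, follows by pulling $e^{\gamma d}$ out of the logarithm in \eqref{Optr}; this also shows $\tilde T_n$ descends to a well defined map on the quotient $C_{sp}(E)$, and it identifies the fixed points as elements of $C(E)$.

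The one genuinely delicate point is property (i), the contraction estimate with a constant independent of the arguments. Lemma \ref{loccon} only gives the local bound \eqref{est1r}, whose Lipschitz constant $\delta_n(|\gamma|(\|v_1\|_{sp}\vee\|v_2\|_{sp}))$ degrades as the span norms of $v_1,v_2$ grow. The device that repairs this is the a priori estimate \eqref{estt} of Lemma \ref{defDrr}: under \eqref{eqn} one has $\|\tilde T_nv(\cdot,\gamma)\|_{sp}\le\|c_n\|_{sp}+\frac1{|\gamma|}\ln K_n$ uniformly in $v$, so the closed span-ball $\mathcal B=\{v:\|v\|_{sp}\le\sup_m(\|c_m\|_{sp}+\frac1{|\gamma|}\ln K_m)\}$ is nonempty, contains $0$, and is invariant under every $\tilde T_m$. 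On $\mathcal B$ the estimate \eqref{est1r} holds with the uniform constant $\Delta_n^\gamma=\delta(|\gamma|\|c_n\|_{sp}+\ln K_n)$ recorded in Lemma \ref{defDrr}, so the restriction of $\tilde T_n$ to $\mathcal B$ satisfies (i) with $\Delta_n=\Delta_n^\gamma$, and since every iterate produced in the proof of Theorem \ref{mtech} starting from $0$ stays in $\mathcal B$, that proof and the proof of Corollary \ref{exist} go through verbatim with $\Delta_n^\gamma$ in place of $\Delta_n$. I expect this reduction — checking that the state-dependent constant of Lemma \ref{loccon} may legitimately be replaced by the uniform $\Delta_n^\gamma$ on the invariant ball — to be the main obstacle; the rest is mechanical.

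With (i) in this form, hypothesis (iii) of Theorem \ref{mtech} is precisely what is assumed of $\Delta_n^\gamma$, and (iv) together with $\sup_nR_n<\infty$ (the quantities $R_n$ being formed from $\|c_n\|_{sp}$ and the $\Delta_n^\gamma$) is the standing assumption. Corollary \ref{exist} then yields functions $\tilde w_n(\cdot,\gamma)\in C(E)$ and constants $\lambda_n^\gamma$ with $\tilde w_n(\bar x,\gamma)=0$ and $\tilde w_n(x,\gamma)=-\lambda_n^\gamma+\tilde T_n\tilde w_{n+1}(x,\gamma)$; substituting the definition \eqref{Optr} of $\tilde T_n$ turns this identity into the Bellman equation \eqref{Bel2}. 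Finally $\sup_nR_n<\infty$ gives $\sup_n\|\tilde w_n(\cdot,\gamma)\|_{sp}<\infty$, so the uniqueness clauses of Theorem \ref{mtech} and Corollary \ref{exist} provide uniqueness of the $\tilde w_n(\cdot,\gamma)$ under the normalization $\tilde w_n(\bar x,\gamma)=0$ and of the constants $\lambda_n^\gamma$, which completes the argument.
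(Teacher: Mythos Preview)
Your proof is correct and follows the same strategy as the paper's: verify that $\tilde T_n$ satisfies the hypotheses of Theorem~\ref{mtech} and Corollary~\ref{exist} (via Lemmas~\ref{loccon} and~\ref{defDrr} together with the obvious checks of (ii) and the additivity $\tilde T_n(v+d)=\tilde T_nv+d$), and then apply Corollary~\ref{exist}. Your explicit discussion of why the argument-dependent contraction constant of Lemma~\ref{loccon} may be replaced by the uniform $\Delta_n^\gamma$ on an invariant span-ball simply spells out what the paper compresses into the phrase ``the second part follows directly from Lemma~\ref{loccon}'' in the proof of Lemma~\ref{defDrr}.
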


\begin{proof} By Lemma \ref{defDrr} operator $\tilde{T}_n$ with fixed $\gamma\neq 0$ satisfies (i) of Theorem 
\ref{mtech}. Also (ii) is satisfied and for constant $d\in \mathbb{R}$ we have  $\tilde{T}_n (v(x,\gamma)+d)=
\tilde{T}_n v(x,\gamma)+d$. Consequently by Corollary \ref{exist} we have required solutions to the Bellman equation \eqref{Bel2}.
\end{proof}
 
\section{Continuity with respect to $\gamma$}
In this section we study dependence of optimal values of the functionals \eqref{fun1}, \eqref{fun2} and solutions to the corresponding Bellman equations \eqref{Bel1}, \eqref{Bel2} on the risk parameter $\gamma$.   We assume that all assumptions of Theorems \ref{averp}   and \ref{averpr} are satisfied. Let  $\tilde{w}_n(x,0)=w(x)$ and $\tilde{w}(\bar{x},\gamma)=0$ for some $\bar{x}\in E$ and $n\in \mathbb{N}_0$ and $\lambda_n^0=\lambda_n$. We have
\begin{theorem} The mapping $E\times \mathbb{R}\ni (x,\gamma) \mapsto \tilde{w}_n(x,\gamma)$ is continuous and consequently  also the mapping $\mathbb{R} \ni \gamma \mapsto \lambda_n^\gamma$ is continuous.
\end{theorem}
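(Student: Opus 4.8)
The plan is to prove joint continuity of $(x,\gamma)\mapsto \tilde w_n(x,\gamma)$ by exhibiting $\tilde w_n(\cdot,\gamma)$ as a uniform-on-compacts limit of the finite-horizon approximations $\tilde w_n^{k,\gamma}:=\tilde T_n^\gamma\tilde T_{n+1}^\gamma\cdots\tilde T_{n+k-1}^\gamma 0$ (normalized to vanish at $\bar x$), and then checking that each approximation depends continuously on $(x,\gamma)$ and that the convergence is uniform in $\gamma$ on compact sets. Before starting I would record the key convention $\tilde T_n^0 v := \sup_{a\in U}\big[c_n(x,a)+\int_E v(y)\bP_n^a(x,dy)\big]=T_nv$, which is the pointwise limit of $\tilde T_n^\gamma$ as $\gamma\to 0$ (by $\frac1\gamma\ln\int e^{\gamma v}\,d\mu\to\int v\,d\mu$); this makes the case $\gamma=0$ not special. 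I would then carry out three steps.

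First, \emph{continuity of the finite-horizon maps.} For fixed $k$ the map $(x,a,\gamma)\mapsto \frac1\gamma\ln\int_E e^{\gamma v(y,\gamma)}\bP_n^a(x,dy)$ is continuous whenever $(x,\gamma)\mapsto v(y,\gamma)$ is continuous and bounded on bounded $\gamma$-sets: the controlled Feller property gives continuity of $(x,a)\mapsto\int_E g(y)\bP_n^a(x,dy)$ for $g\in C(E)$, and $e^{\gamma v(\cdot,\gamma)}$ is, for $\gamma$ in a compact set, uniformly close to $e^{\gamma_0 v(\cdot,\gamma_0)}$ in sup-norm (using the uniform bound on $\|v(\cdot,\gamma)\|$ coming from Lemma \ref{defDrr}, namely $\|\tilde w_m^{j,\gamma}\|_{sp}\le\|c_m\|_{sp}+\frac1{|\gamma|}\ln K_m$ together with the normalization at $\bar x$, which I must check stays bounded as $\gamma\to 0$ — see the obstacle below). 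Since $U$ is compact and $c_n$ is continuous, taking $\sup_{a\in U}$ preserves joint continuity, so by induction on $k$ each $\tilde w_n^{k,\gamma}(x)$ is jointly continuous in $(x,\gamma)$ on $E\times(\mathbb R\setminus\{0\})$ and, via the $\gamma=0$ convention, on all of $E\times\mathbb R$.

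Second, \emph{uniform convergence in $\gamma$.} From the contraction estimate \eqref{impine} of Theorem \ref{mtech} applied to $\tilde T^\gamma$ with constants $\Delta_m^\gamma$ we get $\|\tilde w_n^{k,\gamma}-\tilde w_n(\cdot,\gamma)\|_{sp}\le \Delta_n^\gamma\cdots\Delta_{n+k-1}^\gamma\|\tilde w_{n+k}(\cdot,\gamma)\|_{sp}$. To make this tend to $0$ uniformly for $\gamma$ in a compact neighbourhood of a given $\gamma_0$ I would argue that $\gamma\mapsto \Delta_m^\gamma=\delta_m(|\gamma|\|c_m\|_{sp}+\ln K_m)$ is nondecreasing in $|\gamma|$ in a way that lets the product over $m$ be dominated by the product of the $\Delta_m^{\bar\gamma}$ for $\bar\gamma$ the largest value in the compact set, and that $\sup_m R_m^{\bar\gamma}<\infty$ still holds there (this is exactly the standing assumption of Theorem \ref{averpr}, which I should assume holds uniformly on the compact $\gamma$-set, or verify from $\sup_n K_n<\infty$). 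Then condition (iii) gives the tail bound uniformly in $\gamma$, so the jointly continuous functions $\tilde w_n^{k,\gamma}$ converge uniformly on $E\times[\gamma_0-\eta,\gamma_0+\eta]$, and the limit $\tilde w_n(x,\gamma)$ is jointly continuous. Continuity of $\gamma\mapsto\lambda_n^\gamma$ then follows from $\lambda_n^\gamma=\tilde T_n^\gamma\tilde w_{n+1}(\cdot,\gamma)(\bar x)-\tilde w_n(\bar x,\gamma)$ (equivalently $-\lambda_n^\gamma=(\tilde w_n-\tilde T_n^\gamma\tilde w_{n+1})$ evaluated anywhere) together with the already-established continuity of $\tilde w_{n+1}$ and of the operator application at a fixed point.

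The main obstacle is the behaviour as $\gamma\to 0$: the natural uniform bound $\|\tilde w_n(\cdot,\gamma)\|_{sp}\le \|c_n\|_{sp}+\frac1{|\gamma|}\ln K_n$ blows up, and likewise the contraction constant $\Delta_n^\gamma$ a priori degrades, so one cannot naively pass to the limit. The fix is to exploit that $K_n\to$ something compatible — more precisely, to replace the crude bound by the sharper span estimate $\|\tilde T_n^\gamma v(\cdot,\gamma)\|_{sp}\le\|c_n\|_{sp}+\Delta_n\|v\|_{sp}$ valid \emph{uniformly in $\gamma$} (the tilted transition kernel $\nu^n_{x,a,\gamma v}$ has total-variation oscillation at most $\Delta_n$ regardless of $\gamma$, since reweighting by a positive density cannot increase the Hahn-decomposition bound beyond the ambient $\Delta_n$ — this is essentially the $\gamma$-free estimate underlying Lemma \ref{loccon} and matches the $\gamma=0$ operator $T_n$). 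With this, $R_n$ is bounded by the $\gamma=0$ value uniformly in $\gamma$, the tail in \eqref{impine} is controlled by $\Delta_n\cdots\Delta_{n+k-1}$ uniformly, and the limit $\gamma\to 0$ goes through, giving continuity at $\gamma=0$ as well. I would state this uniform span bound as a short lemma (citing Proposition 6 of \cite{Ste2023} for the mechanism) and then the rest is the routine induction-plus-uniform-limit argument sketched above.
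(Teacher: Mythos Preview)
Your plan for $\gamma\neq 0$ matches the paper's: approximate $\tilde w_n(\cdot,\gamma)$ by the finite compositions $\tilde T_n\cdots\tilde T_{n+k-1}0$, use the Feller property for joint continuity of each approximation, and use \eqref{impine} to pass to the limit. That part is fine (modulo the uniformity of the contraction products in $\gamma$, which you flag and which the paper also needs).

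The genuine gap is your treatment of $\gamma\to 0$. The ``fix'' you propose, namely the uniform span estimate
\[
\|\tilde T_n^\gamma v\|_{sp}\le \|c_n\|_{sp}+\Delta_n\|v\|_{sp}\qquad\text{for all }\gamma,
\]
is false, and so is the supporting claim that exponential tilting by a common density cannot increase the total-variation oscillation of the family $\{\bP_n^a(x,\cdot)\}$. A three-point example shows this: take $\mu_1=0.4\delta_0+0.4\delta_1+0.2\delta_2$ and $\mu_2=0.4\delta_0+0.2\delta_1+0.4\delta_2$, so $\sup_B(\mu_1-\mu_2)(B)=0.2$; tilting by $e^{\gamma v}$ with $v(0)=0$, $v(1)=v(2)=M$ and $\gamma M$ large sends both tilted measures to $(\tfrac23,\tfrac13)$ and $(\tfrac13,\tfrac23)$ on $\{1,2\}$, whose oscillation is $\tfrac13>0.2$. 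Likewise, for the span of $\tilde T_n^\gamma v$ itself (same $v$ in both integrals), take $\mu_1=0.9\delta_0+0.1\delta_1$, $\mu_2=0.95\delta_0+0.05\delta_1$ (so $\Delta_n=0.05$, $K_n=2$), $v(0)=0$, $v(1)=10$, $\gamma=1$: then
\[
\tfrac1\gamma\ln\!\int e^{\gamma v}\,d\mu_1-\tfrac1\gamma\ln\!\int e^{\gamma v}\,d\mu_2
=\ln\frac{0.9+0.1e^{10}}{0.95+0.05e^{10}}\approx\ln 2\approx 0.69,
\]
which exceeds $\Delta_n\|v\|_{sp}=0.5$. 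So neither the tilted-kernel bound nor the resulting span bound holds uniformly in $\gamma$, and your route to a $\gamma$-independent $R_n$ and $\gamma$-independent tail in \eqref{impine} collapses.

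The paper handles $\gamma\to 0$ by a different device: Hoeffding's lemma gives, for any control $V$ and any $k$,
\[
0\le \tfrac1\gamma\ln\ee_x^V\!\Big[e^{\gamma\sum_{i=n}^{n+k-1}c_i}\Big]-\ee_x^V\!\Big[\sum_{i=n}^{n+k-1}c_i\Big]
\le \Big(\sum_{i=n}^{n+k-1}\|c_i\|_{sp}\Big)^2\frac{|\gamma|}{8},
\]
so after taking $\sup_V$ one gets $\|\tilde T_n\cdots\tilde T_{n+k-1}0(\cdot,\gamma_m)-T_n\cdots T_{n+k-1}0\|_{sp}\to 0$ as $\gamma_m\to 0$ for each fixed $k$. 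Combined with the two outer span estimates (for $\tilde w_n(\cdot,\gamma_m)$ and $\tilde w_n(\cdot,0)=w_n$) and the order of limits ``$m\to\infty$ then $k\to\infty$'', this closes the argument at $\gamma=0$. You should replace the false uniform span bound by this Hoeffding step.
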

\begin{proof}
Assume first that $x_m\to x$ and $\gamma_m\to \gamma\neq 0$ as $m\to \infty$.
From \eqref{impine} taking into account that $\|w_n\|_{sp}\leq \sup_i R_i$ we have for $n\in \mathbb{N}_0$, $k\in \mathbb{N}$ and any $\gamma' \in \mathbb{R}$
\begin{equation}\label{impesti}
\|\tilde{T}_n\tilde{T}_{n+1}\ldots \tilde{T}_{n+k-1}0(\cdot,\gamma')-\tilde{w}_n(\cdot,\gamma')\|_{sp}\leq  \Delta_n\ldots \Delta_{n+k-1}\sup_i R_i.
\end{equation}
By controlled Feller property and probabilistic interpretation we obtain
\begin{equation}
\tilde{T}_n\tilde{T}_{n+1}\ldots \tilde{T}_{n+k-1}0(x_m,\gamma_m)\to \tilde{T}_n\tilde{T}_{n+1}\ldots \tilde{T}_{n+k-1}0(x,\gamma)
\end{equation}
as $m\to \infty$. Therefore from \eqref{impesti} we have
\begin{equation}
\|\tilde{w}_n(\cdot,\gamma_m) - \tilde{w}_n(\cdot,\gamma)\|_{sp} \to 0
\end{equation}
as $m\to \infty$ and since $\tilde{w}_n(\bar{x},\gamma_m)=\tilde{w}_n(\bar{x},\gamma)=0$ we also obtain that
\begin{equation}
\|\tilde{w}_n(\cdot,\gamma_m) - \tilde{w}_n(\cdot,\gamma)\|\to 0
\end{equation}
as $m\to \infty$. Convergence $\lambda_n^{\gamma_m}\to \lambda_n$ follows directly from the controlled Feller property and Bellman equation \eqref{Bel2}. Notice that limits are uniquely defined and therefore do not depend on a particular sequence $\gamma_m\to \gamma$.
Assume now that $\gamma=0$. Analogously to \eqref{impesti} we have
 \begin{equation}\label{impesti2}
\|{T}_n{T}_{n+1}\ldots {T}_{n+k-1}0(\cdot)-\tilde{w}_n(\cdot,0)\|_{sp}\leq \Delta_n\ldots \Delta_{n+k-1}\sup_i R_i.
\end{equation}
Using Heffding's lemma (see Lemma 2.6 of \cite{Ma} or Lemma 2 of \cite{Ste2024}) we obtain for any control $V$ that
\begin{equation}\label{impesti3}
 0\leq \ln \left(\ee^V_x \left[\exp\left\{\gamma\sum_{i=n}^{n+k-1}c_i(X_{i-n},a_{i-n})\right\}\right]\right) -  \gamma \ee^V_x\left[\sum_{i=n}^{n+k-1}c_i(X_{i-n},a_{i-n})\right]\leq \left(\sum_{i=n}^{n+k-1}\|c_i\|_{sp}\right)^2 {\gamma^2 \over 8}
\end{equation}
Therefore
\begin{eqnarray}
&&\|\tilde{w}_n(\cdot,\gamma_m)-\tilde{w}_n(\cdot,0)\|_{sp}\leq\|\tilde{w}_n(\cdot,\gamma_m)-\tilde{T}_n\tilde{T}_{n+1}\ldots \tilde{T}_{n+k-1}0(\cdot,\gamma_m)\|_{sp}+ \nonumber \\
&&\|\tilde{T}_n\tilde{T}_{n+1}\ldots \tilde{T}_{n+k-1}0(\cdot,\gamma_m)-{T}_n{T}_{n+1}\ldots {T}_{n+k-1}0(\cdot)\|_{sp} +\|{T}_n{T}_{n+1}\ldots {T}_{n+k-1}0(\cdot)-\tilde{w}_n(\cdot,0)\|_{sp}\leq \nonumber \\
&& 2\Delta_n\ldots \Delta_{n+k-1}\sup_i R_i +  \left(\sum_{i=n}^{n+k-1}\|c_i\|_{sp}\right)^2 {|\gamma_m| \over 8}
\end{eqnarray}
where we  used \eqref{impesti},\eqref{impesti2} and \eqref{impesti3}. Letting $m\to \infty$ and then $k\to \infty$ by we obtain that
\begin{equation}
\|\tilde{w}_n(\cdot,\gamma_m)-\tilde{w}_n(\cdot,0)\|_{sp}\to 0
\end{equation}
as $m\to \infty$. Since $\tilde{w}_n(\bar{x},\gamma_m)=\tilde{w}_n(\bar{x},0)=0$ and $w_n\in C(E)$ we finally have that  $\lim_{m\to \infty}\tilde{w}_n(x_m,\gamma_m)=\tilde{w}_n(x,0)$. Since limit is the same for any other sequence $\gamma_m \to 0$ as $m\to \infty$ we have required continuity  and the proof is completed.
\end{proof}

\section{Stability with respect to controls}
In this section we do not assume controlled Feller property for the transition operator $\bP_n^a$.   Denote by ${\cal U}$ the class of Borel measurable functions $u:E \mapsto U$. For a sequence $u_n\in {\cal U}$ define define the operators $T_n^{u_n}$ and $\tilde{T}_n^{u_n}$ in the following way
 \begin{equation}\label{opTc}
T_n^{u_n}v(x) := c_n(x,u_n(x))+\int_E v(y)\bP_n^{u_n(x)}(x,dy)
\end{equation}
for $v\in B(E)$ and
\begin{equation}\label{Optrc}
\tilde{T}_n^{u_n} v(x,\gamma):= c_n(x,u_n(x))+{1\over \gamma}\ln\int_E e^{\gamma v(y,\gamma)}\bP_n^{u_n(x)}(x,dy)
\end{equation}
for $v(\cdot,\gamma)\in B(E)$ with fixed $\gamma\neq 0$. We have

\begin{proposition}\label{averpc}
Assume that $\Delta_n$ defined in \eqref{defD} satisfies (iii) of Theorem \ref{mtech} and $\sup_n R_n<\infty$. Then for any sequence $u_n\in {\cal U}$ there exist unique: functions  $w_n^{u_n}\in B(E)$ such that for a fixed $\bar{x}\in E$ we have  $w_n^{u_n}(\bar{x})=0$ and constants $\lambda_n(u_n)$ which are solutions to so called equation Poisson equation
\begin{equation}\label{Pois1}
w_n^{u_n}(x)= c_n(x,u_n(x))-\lambda_n(u_n)+\int_E w_{n+1}^{u_{n+1}}(y)\bP_n^{u_n(x)}(x,dy).
\end{equation}
Furthermore
\begin{equation}\label{impinec}
\|T_n^{u_n}T_{n+1}^{u_{n+1}}\ldots T_{n+k-1}^{u_{n+k-1}}0-w_n\|_{sp}\leq  \Delta_n\ldots \Delta_{n+k-1}\sup_i R_i.
\end{equation}
\end{proposition}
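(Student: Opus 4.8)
The plan is to verify that the operators $T_n^{u_n}$ defined in \eqref{opTc} satisfy all the hypotheses of Proposition \ref{mtechp} and then invoke it. First I would check that $T_n^{u_n}$ maps $B_{sp}(E)$ into itself: for $v\in B(E)$ the function $x\mapsto c_n(x,u_n(x))+\int_E v(y)\bP_n^{u_n(x)}(x,dy)$ is bounded, and it is Borel measurable since $u_n$ is Borel and the kernel is jointly Borel in its arguments, so $T_n^{u_n}v$ passes to a well-defined element of $B_{sp}(E)$. The additivity property $T_n^{u_n}(v+d)=T_n^{u_n}v+d$ needed in Proposition \ref{mtechp} is immediate because $\bP_n^{u_n(x)}(x,\cdot)$ is a probability measure. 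Condition (ii) of Theorem \ref{mtech} is also immediate: since $T_n^{u_n}0(x)=c_n(x,u_n(x))$ we get
\[
\|T_n^{u_n}0\|_{sp}=\sup_{x,x'\in E}\left(c_n(x,u_n(x))-c_n(x',u_n(x'))\right)\leq \sup_{x,x'\in E}\sup_{a,a'\in U}\left(c_n(x,a)-c_n(x',a')\right)=\|c_n\|_{sp}.
\]

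For condition (i), given $v_1,v_2\in B(E)$ and $x_1,x_2\in E$ one has
\[
T_n^{u_n}v_1(x_1)-T_n^{u_n}v_2(x_1)-\left(T_n^{u_n}v_1(x_2)-T_n^{u_n}v_2(x_2)\right)=\int_E(v_1-v_2)(y)\left(\bP_n^{u_n(x_1)}(x_1,dy)-\bP_n^{u_n(x_2)}(x_2,dy)\right),
\]
and splitting this integral over the Hahn decomposition of the signed measure $\bP_n^{u_n(x_1)}(x_1,\cdot)-\bP_n^{u_n(x_2)}(x_2,\cdot)$ exactly as in the lemma preceding Theorem \ref{averp} bounds the right-hand side by $\|v_1-v_2\|_{sp}\,\left(\bP_n^{u_n(x_1)}(x_1,D)-\bP_n^{u_n(x_2)}(x_2,D)\right)\leq\Delta_n\|v_1-v_2\|_{sp}$, the last step using that $u_n(x_1),u_n(x_2)\in U$ and that $\Delta_n$ in \eqref{defD} is a supremum over all controls. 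The only difference from the Feller case is that the outer $\sup_{a\in U}$ has been replaced by evaluation at the selectors $u_n(\cdot)$, which cannot enlarge the left-hand side, so the same constant $\Delta_n$ works. Conditions (iii) and (iv) hold by hypothesis, since $\Delta_n$ is assumed to satisfy (iii) and $\sup_n R_n<\infty$ forces each $R_n<\infty$, which is (iv).

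Applying Proposition \ref{mtechp} to the sequence $(T_n^{u_n})$ then yields, for the fixed $\bar{x}$, functions $w_n^{u_n}\in B(E)$ with $w_n^{u_n}(\bar{x})=0$ and constants $\lambda_n(u_n)$ satisfying $w_n^{u_n}=-\lambda_n(u_n)+T_n^{u_n}w_{n+1}^{u_{n+1}}$, which written out is precisely the Poisson equation \eqref{Pois1}; uniqueness of both $w_n^{u_n}$ and $\lambda_n(u_n)$ follows from the last assertion of Proposition \ref{mtechp} because $\sup_n R_n<\infty$. Finally, \eqref{impinen} gives $\|T_n^{u_n}T_{n+1}^{u_{n+1}}\ldots T_{n+k-1}^{u_{n+k-1}}0-w_n^{u_n}\|_{sp}\leq\Delta_n\ldots\Delta_{n+k-1}\|w_{n+k}^{u_{n+k}}\|_{sp}$, and bounding $\|w_{n+k}^{u_{n+k}}\|_{sp}\leq R_{n+k}\leq\sup_i R_i$ produces \eqref{impinec}. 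There is essentially no genuine obstacle here: the whole content sits in Proposition \ref{mtechp}, and the only points needing a little care are the Borel measurability of $x\mapsto\int_E v(y)\bP_n^{u_n(x)}(x,dy)$ (so that $T_n^{u_n}v\in B(E)$) and the observation that the single-selector span estimate runs through the same Hahn-decomposition argument with the same constant $\Delta_n$.
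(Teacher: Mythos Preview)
Your proposal is correct and follows exactly the paper's approach: the paper's proof is the single sentence ``We use Proposition \ref{mtechp} to the operator $T_n^{u_n}$ in a similar way as in Theorem \ref{averp},'' and you have simply spelled out the verifications (measurability, additivity, conditions (i)--(iv)) that this sentence leaves implicit. The derivation of \eqref{impinec} from \eqref{impinen} together with $\|w_{n+k}^{u_{n+k}}\|_{sp}\leq R_{n+k}\leq\sup_i R_i$ is also the intended step.
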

\begin{proof} We use Proposition \ref{mtechp} to the operator $T_n^{u_n}$ in a similar way as in
Theorem \ref{averp}.
\end{proof}

\begin{proposition}\label{averprc}
Assume that $\Delta_n^\gamma$ defined in Lemma \ref{defDrr} satisfies (iii) of Theorem \ref{mtech} and $\sup_n R_n<\infty$. Then for any sequence $u_n\in {\cal U}$ and fixed $\gamma\neq 0$  there exist unique: functions  $\tilde{w}_n^{u_n}(\cdot,\gamma)\in B(E)$ such that for a fixed $\bar{x}\in E$ we have  $\tilde{w}_n^{u_n}(\bar{x},\gamma)=0$ and constants $\lambda_n^\gamma(u_n)$ which are solutions to the equation
\begin{equation}\label{Pois2}
\tilde{w}_n^{u_n}(x,\gamma)=c_n(x,u_n(x))-\lambda_n^\gamma(u_n)+ {1\over \gamma}\ln\int_E e^{\gamma \tilde{w}_{n+1}^{u_{n+1}}(y,\gamma)}\bP_n^{u_n(x)}(x,dy).
\end{equation}
In addition
\begin{equation}\label{impinecc}
\|\tilde{T}_n^{u_n}\tilde{T}_{n+1}^{u_{n+1}}\ldots \tilde{T}_{n+k-1}^{u_{n+k-1}}0(\cdot,\gamma)-\tilde{w}_n(\cdot,\gamma)\|_{sp}\leq \Delta_n\ldots \Delta_{n+k-1}sup_i R_i.
\end{equation}
\end{proposition}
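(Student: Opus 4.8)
The plan is to reproduce the argument of Proposition~\ref{averpc} in the risk sensitive setting: for a fixed sequence of selectors $u_n\in\mathcal{U}$ I would verify that the operators $\tilde T_n^{u_n}$ of \eqref{Optrc} satisfy assumptions (i)--(iv) of Theorem~\ref{mtech} together with the shift property, and then invoke Proposition~\ref{mtechp}. First I would check that each $\tilde T_n^{u_n}$ maps $B_{sp}(E)$ into itself: since $c_n$ is bounded and $(x,a)\mapsto\bP_n^a(x,B)$ is Borel measurable for every Borel $B$ (as is needed throughout this section), the map $x\mapsto\bP_n^{u_n(x)}(x,\cdot)$ is a Borel stochastic kernel, so for $v(\cdot,\gamma)\in B(E)$ the function $x\mapsto c_n(x,u_n(x))+\frac1\gamma\ln\int_E e^{\gamma v(y,\gamma)}\bP_n^{u_n(x)}(x,dy)$ is bounded and Borel and descends to the quotient $B_{sp}(E)$. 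The shift property $\tilde T_n^{u_n}(v+d)=\tilde T_n^{u_n}v+d$, $d\in\mathbb{R}$, is immediate from $\frac1\gamma\ln\int_E e^{\gamma(v+d)}\bP_n^{u_n(x)}(x,dy)=d+\frac1\gamma\ln\int_E e^{\gamma v}\bP_n^{u_n(x)}(x,dy)$, and assumption (ii) holds since $\tilde T_n^{u_n}0(x,\gamma)=c_n(x,u_n(x))$, whence $\|\tilde T_n^{u_n}0\|_{sp}=\sup_{x,x'}\big(c_n(x,u_n(x))-c_n(x',u_n(x'))\big)\le\sup_{x,x'}\sup_{a,a'}\big(c_n(x,a)-c_n(x',a')\big)=\|c_n\|_{sp}$.

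The only point that needs genuine work is assumption (i), the span contraction, and here I would carry over Lemmas~\ref{loccon} and~\ref{defDrr}. The crucial observation is that the local contraction constant $\delta_n(\cdot)$ in \eqref{defDr}--\eqref{defDr1} and the constant $K_n$ in \eqref{eqn} are defined as suprema over all $a,a'\in U$ (resp.\ over $a\in U$), whereas a Borel selector $u_n$ only produces particular points $u_n(x)\in U$; hence replacing the suprema over $U$ by the values $u_n(x_1),u_n(x_2)$ can only decrease the corresponding quantities. Running the proof of Lemma~\ref{defDrr} with $a=u_n(x_1)$, $a'=u_n(x_2)$ instead of suprema therefore still gives the span bound $\|\tilde T_n^{u_n}v(\cdot,\gamma)\|_{sp}\le\|c_n\|_{sp}+\frac1{|\gamma|}\ln K_n$ under \eqref{eqn}, and running the proof of Lemma~\ref{loccon} (via the variational formula of \cite{Dupuis}, separately for $\gamma>0$ and $\gamma<0$) gives $\|\tilde T_n^{u_n}v_1(\cdot,\gamma)-\tilde T_n^{u_n}v_2(\cdot,\gamma)\|_{sp}\le\Delta_n^\gamma\,\|v_1(\cdot,\gamma)-v_2(\cdot,\gamma)\|_{sp}$ with the very same constant $\Delta_n^\gamma$ of Lemma~\ref{defDrr}; thus (i) of Theorem~\ref{mtech} holds for $(\tilde T_n^{u_n})$ with $\Delta_n$ replaced by $\Delta_n^\gamma$.

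Conditions (iii) and (iv) for $(\tilde T_n^{u_n})$ then hold by hypothesis: $\Delta_n^\gamma$ satisfies (iii), and $\sup_n R_n<\infty$ forces each $R_n<\infty$, which is (iv). Proposition~\ref{mtechp} therefore applies to $(\tilde T_n^{u_n})$ and yields, for the fixed $\bar x\in E$, functions $\tilde w_n^{u_n}(\cdot,\gamma)\in B_{sp}(E)$ with $\tilde w_n^{u_n}(\bar x,\gamma)=0$ and constants $\lambda_n^\gamma(u_n)$ solving \eqref{poissn}, which after unfolding the definition of $\tilde T_n^{u_n}$ is exactly the Poisson equation \eqref{Pois2}; moreover \eqref{impinen}, combined with $\|\tilde w_{n+k}\|_{sp}\le R_{n+k}\le\sup_i R_i$, gives \eqref{impinecc}, and uniqueness of $\tilde w_n^{u_n}$ and $\lambda_n^\gamma(u_n)$ under $\sup_n R_n<\infty$ is the uniqueness clause of Proposition~\ref{mtechp}. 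I expect the only real obstacle to be the second paragraph, i.e.\ checking that the risk sensitive span estimates borrowed from \cite{Ste2023}, stated there for the $\sup$-operator $\tilde T_n$ on $C(E)$, remain valid for the fixed--control operator $\tilde T_n^{u_n}$ on the larger space $B_{sp}(E)$; this should be routine, since dropping the Feller/continuity hypothesis is irrelevant to the span--norm computations and fixing a Borel selector merely restricts the suprema defining $\delta_n$ and $K_n$.
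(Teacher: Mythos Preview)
Your proposal is correct and follows essentially the same route as the paper: the paper's proof is a one-liner, ``We follow the arguments of the proof of Theorem~\ref{averpr} with operator $\tilde{T}_n^{u_n}$ defined in \eqref{Optrc},'' and your write-up simply unpacks those arguments---verifying (i)--(iv) and the shift property for $\tilde T_n^{u_n}$ on $B_{sp}(E)$ via the obvious adaptations of Lemmas~\ref{loccon} and~\ref{defDrr}, then invoking Proposition~\ref{mtechp}. Your observation that fixing a Borel selector only restricts the suprema defining $\delta_n$ and $K_n$, so the same constant $\Delta_n^\gamma$ works, is exactly the point implicit in the paper's reference to Theorem~\ref{averpr}.
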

\begin{proof} We follow the arguments of the proof of Theorem \ref{averpr} with operator $\tilde{T}_n^{u_n}$ defined in \eqref{Optrc}.
\end{proof}

Consider pointwise convergence topology in ${\cal U}$ i.e. ${\cal U} \ni u^m \to u\in {\cal U}$ whenever $u^m(x)\to u(x)$ for $x\in E$ and $m\to \infty$. We extend this convergence to sequences of controls $u_n\in {\cal U}$ for $n\in \mathbb{N}_0$. We write $(u_n^m)\to (u_n)$ as $m\to \infty$ whenever
$u_n^m\to u_n$ for each $n\in  \mathbb{N}_0$.

We shall need the following continuity of the transition operators
\begin{equation}\label{varcont}
U\ni a \mapsto \bP_n^a(x,\cdot) \quad \mbox{is continuous in variation norm}
\end{equation}
for each $n\in \mathbb{N}$, which means that $a^m\to a$ implies that $\bP_n^{a^m}(x,\cdot)$ converges to $\bP_n^{a}(x,\cdot)$ in variation norm. We also assume that for $n\in \mathbb{N}_0$
\begin{equation}\label{ucontc}
\sup_x |c_n(x,a^m)-c_n(x,a)| \quad \mbox{$a^m \to a$ as $m\to \infty$}
\end{equation}

\begin{lemma}\label{impol}
Under \eqref{varcont}  when $(u_n^m)\to (u_n)$ as $m\to \infty$ we have for $n\in \mathbb{N}_0$ and $k\in \mathbb{N}$ and $x\in E$
\begin{equation}
T_n^{u_n^m}T_{n+1}^{u_{n+1}^m}\ldots T_{n+k-1}^{u_{n+k-1}^m}0(x) \to T_n^{u_n}T_{n+1}^{u_{n+1}}\ldots T_{n+k-1}^{u_{n+k-1}}0(x)
\end{equation}
Additionally under \eqref{ucontc} the above convergence is also in the span norm.
\end{lemma}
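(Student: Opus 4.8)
I would prove this by induction on $k$, using the affine form of the operators $T_n^{u}$ together with the single-control span contraction already built into Proposition \ref{averpc}, namely $\|T_n^{u}v_1-T_n^{u}v_2\|_{sp}\le\Delta_n\|v_1-v_2\|_{sp}$ (the same Hahn-decomposition estimate proved for the operator $T_n$ right before Theorem \ref{averp}). For $k=1$ we have $T_n^{u_n^m}0(x)=c_n(x,u_n^m(x))$, and since $c_n$ is continuous on $E\times U$ and $u_n^m(x)\to u_n(x)$ this converges to $c_n(x,u_n(x))=T_n^{u_n}0(x)$; the span-norm part of the base case is exactly the statement that $\sup_x|c_n(x,u_n^m(x))-c_n(x,u_n(x))|\to 0$, which is where \eqref{ucontc} enters.

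For the inductive step put $g_m:=T_{n+1}^{u_{n+1}^m}\cdots T_{n+k-1}^{u_{n+k-1}^m}0$ and $g:=T_{n+1}^{u_{n+1}}\cdots T_{n+k-1}^{u_{n+k-1}}0$; these are uniformly bounded, $\|g_m\|\le\sum_{i=1}^{k-1}\|c_{n+i}\|$, and by the inductive hypothesis $g_m\to g$ pointwise (and, under \eqref{ucontc}, in span norm). I would then decompose
\begin{align*}
T_n^{u_n^m}g_m(x)-T_n^{u_n}g(x)&=\big(c_n(x,u_n^m(x))-c_n(x,u_n(x))\big)\\
&\quad+\int_E(g_m-g)\,d\bP_n^{u_n^m(x)}(x,\cdot)+\int_E g\,d\big(\bP_n^{u_n^m(x)}(x,\cdot)-\bP_n^{u_n(x)}(x,\cdot)\big).
\end{align*}
The first term tends to $0$ by continuity of $c_n$. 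For the second, split it as $\int_E(g_m-g)\,d\bP_n^{u_n(x)}(x,\cdot)+\int_E(g_m-g)\,d\big(\bP_n^{u_n^m(x)}(x,\cdot)-\bP_n^{u_n(x)}(x,\cdot)\big)$: the first piece vanishes by bounded convergence (the measure is now fixed and finite, $g_m\to g$ pointwise, $\sup_m\|g_m\|<\infty$), while the second is at most $(\|g_m\|+\|g\|)$ times the total variation of $\bP_n^{u_n^m(x)}(x,\cdot)-\bP_n^{u_n(x)}(x,\cdot)$, which tends to $0$ by \eqref{varcont}; the third term goes to $0$ for the same reason since $g$ is bounded. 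This gives the pointwise convergence.

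For the span-norm statement I would not estimate term by term but use the contraction: $\|T_n^{u_n^m}g_m-T_n^{u_n}g\|_{sp}\le\|T_n^{u_n^m}g_m-T_n^{u_n^m}g\|_{sp}+\|T_n^{u_n^m}g-T_n^{u_n}g\|_{sp}\le\Delta_n\|g_m-g\|_{sp}+\|T_n^{u_n^m}g-T_n^{u_n}g\|_{sp}$. The first summand vanishes by the inductive hypothesis, so the matter reduces to showing $\|T_n^{u_n^m}g-T_n^{u_n}g\|_{sp}\to 0$ for the \emph{fixed} function $g$, i.e. that the span norm of $x\mapsto\big(c_n(x,u_n^m(x))-c_n(x,u_n(x))\big)+\int_E g\,d\big(\bP_n^{u_n^m(x)}(x,\cdot)-\bP_n^{u_n(x)}(x,\cdot)\big)$ tends to $0$.

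The main obstacle is exactly this last reduction: this function already converges to $0$ pointwise in $x$, but span-norm (equivalently, oscillation) convergence requires the convergence to be uniform in $x$, while the selectors $u_n^m(x)$ depend on $x$ in an arbitrary Borel way, so the pointwise statements and the pointwise convergence $u_n^m(x)\to u_n(x)$ do not in themselves yield it. My plan is to bound the cost contribution uniformly in $x$ through \eqref{ucontc} and the transition contribution through the uniform-in-$x$ counterpart of \eqref{varcont} (equi-variation-continuity of $a\mapsto\bP_n^a(x,\cdot)$ in the variable $x$), after which the induction on $k$ closes; I expect establishing this uniformity to be the delicate part of the argument.
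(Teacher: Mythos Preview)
Your inductive scheme and the three-term decomposition for the pointwise statement are exactly what the paper intends; its own proof is a one-line appeal to Propositions~1 and~2 of \cite{Ste2024} and records nothing beyond ``induction'' and the remark that \eqref{ucontc} is what is needed for the span-norm upgrade. On the pointwise half you have reproduced the argument completely and in more detail than the paper itself provides.

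Your hesitation about the span-norm half is well placed, and you have not overlooked any idea that the paper actually supplies. The contraction reduction $\|T_n^{u_n^m}g_m-T_n^{u_n^m}g\|_{sp}\le\Delta_n\|g_m-g\|_{sp}$ is clean, and you correctly isolate the residual task: forcing $\sup_x$-smallness of $c_n(x,u_n^m(x))-c_n(x,u_n(x))$ and of $\int g\,d(\bP_n^{u_n^m(x)}(x,\cdot)-\bP_n^{u_n(x)}(x,\cdot))$ from merely pointwise convergence $u_n^m(x)\to u_n(x)$. Note that \eqref{ucontc} only gives equicontinuity of the family $\{c_n(x,\cdot)\}_{x}$; without uniform-in-$x$ convergence of the selectors this does not yield the uniform bound (e.g.\ $E=\mathbb N$, $c_n(x,a)=a$, $u_n^m(x)=\mathbbm{1}_{\{x\le m\}}$, $u_n\equiv 1$ already breaks the $k=1$ span-norm claim), and the same objection applies to the transition term under \eqref{varcont} alone. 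The paper does not close this in the text either---it delegates the matter to \cite{Ste2024}---so your proposed $x$-uniform strengthening of \eqref{varcont} is a reasonable way to proceed, but be aware that it goes beyond the hypotheses written here.
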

\begin{proof} The proof is by induction as in Proposition 2 of \cite{Ste2024} using Proposition 1 of \cite{Ste2024}. To have convergence in the span norm we need uniform in $x$ continuity of $c_n$ with respect to the second variable which is guaranteed by \eqref{ucontc}.
\end{proof}
The strategy $V=(u_0(X_0),u_1(X_1),\ldots,u_n(X_n),\ldots)$ in what follows will be denoted shortly by $V=((u_n))$.
\begin{theorem} Under \eqref{varcont} and \eqref{ucontc} assuming additionally that $\Delta_n$ defined in \eqref{defD} satisfies (iii) of Theorem \ref{mtech} and $\sup_n R_n<\infty$ when  $(u_n^m)\to (u_n)$ as $m\to \infty$ we have that
$w_n^{u_n^m}\to w_n^{u_n}$ and $\lambda_n(u_n^m)\to \lambda_n(u_n)$ for each $n\in \mathbb{N}$ as $m\to \infty$, where $(w_n^{u_n^m})$, $(\lambda_n(u_n^m))$ and $(w_n^{u_n})$, $(\lambda_n(u_n))$ are solutions to the Poisson equations \eqref{Pois1} corresponding to Markov controls $(u_n^m)$ and $(u_n)$ respectively. Consequently  $J_x((u_n^m))\to J_x((u_n))$ for $x\in E$ and $m\to \infty$.
\end{theorem}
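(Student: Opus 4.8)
The plan is to transfer two quantitative facts already established in this section — the uniform contraction estimate \eqref{impinec} of Proposition \ref{averpc} and the pointwise continuity of Lemma \ref{impol} — through a single triangle inequality to get convergence of the span functions $w_n^{u_n^m}$, then to read convergence of the constants $\lambda_n(u_n^m)$ off the Poisson equation \eqref{Pois1} taken at $\bar x$, and finally to identify $J_x$ with the $\liminf$ of the arithmetic means of the $\lambda_n$'s.

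First I would fix $n,k$ and bound $\|w_n^{u_n^m}-w_n^{u_n}\|_{sp}$ by $\|w_n^{u_n^m}-T_n^{u_n^m}\cdots T_{n+k-1}^{u_{n+k-1}^m}0\|_{sp}+\|T_n^{u_n^m}\cdots T_{n+k-1}^{u_{n+k-1}^m}0-T_n^{u_n}\cdots T_{n+k-1}^{u_{n+k-1}}0\|_{sp}+\|T_n^{u_n}\cdots T_{n+k-1}^{u_{n+k-1}}0-w_n^{u_n}\|_{sp}$. By \eqref{impinec} the first and third terms are each at most $\Delta_n\cdots\Delta_{n+k-1}\sup_iR_i$; by the span-norm part of Lemma \ref{impol} (the place where \eqref{varcont} and \eqref{ucontc} enter) the middle term $\to 0$ as $m\to\infty$ for each fixed $k$. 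Letting $m\to\infty$ and then $k\to\infty$ and using (iii) of Theorem \ref{mtech} gives $\|w_n^{u_n^m}-w_n^{u_n}\|_{sp}\to 0$; since both functions vanish at $\bar x$, and $\|v\|\le\|v\|_{sp}$ when $v(\bar x)=0$, also $\|w_n^{u_n^m}-w_n^{u_n}\|\to 0$ for every $n$.

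Next, taking $x=\bar x$ in \eqref{Pois1} and using $w_n^{u_n}(\bar x)=0$ turns it into $\lambda_n(u_n)=c_n(\bar x,u_n(\bar x))+\int_E w_{n+1}^{u_{n+1}}(y)\,\bP_n^{u_n(\bar x)}(\bar x,dy)$, and likewise for $u^m$. Since $u_n^m(\bar x)\to u_n(\bar x)$, the cost terms converge by \eqref{ucontc}; for the integrals I would insert $\int_E w_{n+1}^{u_{n+1}^m}\,d\bP_n^{u_n^m(\bar x)}(\bar x,\cdot)$, so that the difference splits into a piece bounded by $\|w_{n+1}^{u_{n+1}^m}-w_{n+1}^{u_{n+1}}\|\to 0$ (by the previous step) and a piece bounded by $\|w_{n+1}^{u_{n+1}}\|$ times the variation-norm distance of the two kernels, which $\to 0$ by \eqref{varcont} since $\|w_{n+1}^{u_{n+1}}\|\le\sup_iR_i$. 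Hence $\lambda_n(u_n^m)\to\lambda_n(u_n)$ for every $n$.

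Finally, iterating \eqref{Pois1} along the controlled chain, taking $\ee_x^{((u_n))}$, and telescoping the $w$-terms yields $\sum_{i=0}^{k-1}\lambda_i(u_i)=\ee_x^{((u_n))}\big[\sum_{i=0}^{k-1}c_i(X_i,u_i(X_i))\big]-w_0^{u_0}(x)+\ee_x^{((u_n))}[w_k^{u_k}(X_k)]$; as $\|w_j^{u_j}\|\le\sup_iR_i$ for all $j$, the last two terms are uniformly bounded, so dividing by $k$ and taking $\liminf_k$ gives $J_x((u_n))=\liminf_k\frac1k\sum_{i=0}^{k-1}\lambda_i(u_i)$, with the same identity and the same uniform $O(1/k)$ remainder for $((u_n^m))$. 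It then remains to deduce convergence of these arithmetic means from $\lambda_i(u_i^m)\to\lambda_i(u_i)$, and this is the step I expect to be the main obstacle, since pointwise convergence of a uniformly bounded sequence does not by itself give convergence of the $\liminf$ of its arithmetic means. I would close the gap by upgrading the first two steps to convergence uniform in $n$: the bound in the first step is uniform in $n$ once $\sup_n\Delta_n\cdots\Delta_{n+k-1}\to 0$ as $k\to\infty$ (which holds, e.g., in the setting of the Remark, where it is $\le\Delta^k$), and the span estimate in Lemma \ref{impol} is uniform in $n$ under equicontinuity of the families $\{c_i(x,\cdot)\}_i$ and $\{\bP_i^{\cdot}(x,\cdot)\}_i$; then $\sup_n|\lambda_n(u_n^m)-\lambda_n(u_n)|\to 0$, and $\big|\frac1k\sum_{i=0}^{k-1}\lambda_i(u_i^m)-\frac1k\sum_{i=0}^{k-1}\lambda_i(u_i)\big|\le\sup_n|\lambda_n(u_n^m)-\lambda_n(u_n)|$ tends to $0$ uniformly in $k$, forcing $J_x((u_n^m))\to J_x((u_n))$.
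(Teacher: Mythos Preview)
Your argument for $\|w_n^{u_n^m}-w_n^{u_n}\|_{sp}\to 0$ is exactly the paper's: the same three-term triangle inequality, \eqref{impinec} for the outer terms, and Lemma \ref{impol} for the middle one. Your derivation of $\lambda_n(u_n^m)\to\lambda_n(u_n)$ from the Poisson equation at $\bar x$ is more explicit than the paper's one-line appeal to \eqref{Pois1}, but it is the same idea.

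Where you and the paper diverge is the final assertion $J_x((u_n^m))\to J_x((u_n))$. The paper simply writes that this follows ``by the Poisson equation \eqref{Pois1}'' and gives no further argument. You, correctly, isolate the real difficulty: from $J_x((u_n))=\liminf_k\frac1k\sum_{i=0}^{k-1}\lambda_i(u_i)$ and pointwise (in $i$) convergence $\lambda_i(u_i^m)\to\lambda_i(u_i)$ one cannot, in general, conclude convergence of the $\liminf$ of the Ces\`aro means --- your diagnosis is sound. However, your proposed cure imports hypotheses that are \emph{not} in the theorem: you need $\sup_n\Delta_n\cdots\Delta_{n+k-1}\to 0$ (assumption (iii) is only for each fixed $n$) and equicontinuity in $a$ of the families $\{c_i\}_i$ and $\{\bP_i^a\}_i$ uniformly in $i$ to make the Lemma \ref{impol} bound uniform in $n$. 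Under the hypotheses actually stated, your argument does not close, and the paper's proof does not close this step either --- it merely asserts the conclusion. So on this last point you have not introduced a new gap; you have exposed one that the paper leaves open, and your remedy, while reasonable, strengthens the assumptions.
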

 \begin{proof}
We have
\begin{eqnarray}\label{os1}
&&\|w_n^{u_n^m}-w_n^{u_n}\|_{sp}\leq \|w_n^{u_n^m}-T_n^{u_n^m}T_{n+1}^{u_{n+1}^m}\ldots T_{n+k-1}^{u_{n+k-1}^m}0\|_{sp} + \nonumber \\
&&\|T_n^{u_n^m}T_{n+1}^{u_{n+1}^m}\ldots T_{n+k-1}^{u_{n+k-1}^m}0(x)- T_n^{u_n}T_{n+1}^{u_{n+1}}\ldots T_{n+k-1}^{u_{n+k-1}}0(x)\|_{sp} \nonumber \\
&&+\|T_n^{u_n}T_{n+1}^{u_{n+1}}\ldots T_{n+k-1}^{u_{n+k-1}}0(x)-w_n^{u_n}\|_{sp}= a_n^m+b_n^m+d_n
\end{eqnarray}
Clearly using \eqref{impinec} we obtain that
\begin{equation}\label{os2}
0\leq a_n^m \vee d_n \leq  \Delta_n\ldots \Delta_{n+k-1}\sup_i R_i.
\end{equation}
By Lemma \ref{impol} we also have that $b_n^m\to 0$ as $m\to \infty$.
Since $\|w_n^m-w_n\|_{sp}\to 0$ as $m\to \infty$ we also have a convergence in supremum norm. Then by the Poisson equation \eqref{Pois1} we also have that $\lambda_n(u_n^m)\to \lambda_n(u_n)$
and $J_x((u_n^m))\to J_x((u_n))$, as $m\to \infty$, which completes the proof.
\end{proof}

\begin{theorem} Assume that $\Delta_n^\gamma$ defined in Lemma \ref{defDrr} satisfies (iii) of Theorem \ref{mtech} and $\sup_n R_n<\infty$. Then under \eqref{varcont} and \eqref{ucontc}
 when  $(u_n^m)\to (u_n)$ as $m\to \infty$ we have that
$\tilde{w}_n^{u_n^m}\to \tilde{w}_n$ and $\lambda_n^\gamma(u_n^m)\to \lambda_n^\gamma(u_n)$ for each $n\in \mathbb{N}$ and $\gamma \neq 0$ as $m\to \infty$, where $(w_n^{u_n^m})$, $(\lambda_n^\gamma(u_n^m))$ and $(\tilde{w}_n^{u_n})$, $(\lambda_n^\gamma(u_n))$ are solutions to the Poisson equations \eqref{Pois2} corresponding to Markov controls $(u_n^m)$ and $(u_n)$ respectively. Moreover $J_x^\gamma((u_n^m))\to J_x^\gamma((u_n))$ for $x\in E$ and $m\to \infty$.
\end{theorem}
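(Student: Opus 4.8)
The plan is to mirror, essentially verbatim, the proof of the preceding theorem for the risk-neutral case, replacing the operators $T_n^{u_n}$ by their risk-sensitive analogues $\tilde{T}_n^{u_n}$ defined in \eqref{Optrc}. First I would fix $\gamma\neq 0$ and, exactly as in \eqref{os1}, split the span-norm difference using a $k$-fold composition of the one-step operators as an intermediary:
\begin{eqnarray*}
&&\|\tilde{w}_n^{u_n^m}(\cdot,\gamma)-\tilde{w}_n^{u_n}(\cdot,\gamma)\|_{sp}\leq \|\tilde{w}_n^{u_n^m}(\cdot,\gamma)-\tilde{T}_n^{u_n^m}\ldots \tilde{T}_{n+k-1}^{u_{n+k-1}^m}0(\cdot,\gamma)\|_{sp} \\
&&\quad +\|\tilde{T}_n^{u_n^m}\ldots \tilde{T}_{n+k-1}^{u_{n+k-1}^m}0(\cdot,\gamma)-\tilde{T}_n^{u_n}\ldots \tilde{T}_{n+k-1}^{u_{n+k-1}}0(\cdot,\gamma)\|_{sp} \\
&&\quad +\|\tilde{T}_n^{u_n}\ldots \tilde{T}_{n+k-1}^{u_{n+k-1}}0(\cdot,\gamma)-\tilde{w}_n^{u_n}(\cdot,\gamma)\|_{sp}=\tilde{a}_n^m+\tilde{b}_n^m+\tilde{d}_n.
\end{eqnarray*}
The first and third terms are bounded by $\Delta_n^\gamma\cdots\Delta_{n+k-1}^\gamma\sup_i R_i$ directly from \eqref{impinecc} of Proposition \ref{averprc}, and by assumption (iii) of Theorem \ref{mtech} this tends to $0$ as $k\to\infty$, uniformly in $m$.

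The middle term $\tilde{b}_n^m$ is where the real work is: I need the analogue of Lemma \ref{impol} for the risk-sensitive operators, namely that under \eqref{varcont} and \eqref{ucontc}, if $(u_n^m)\to(u_n)$ then $\tilde{T}_n^{u_n^m}\cdots\tilde{T}_{n+k-1}^{u_{n+k-1}^m}0(x,\gamma)\to \tilde{T}_n^{u_n}\cdots\tilde{T}_{n+k-1}^{u_{n+k-1}}0(x,\gamma)$, and that the convergence is in span norm. The proof is again by induction on $k$: the base case $k=1$ follows because $u_n^m(x)\to u_n(x)$ together with variation-norm continuity \eqref{varcont} forces $\int_E e^{\gamma g(y,\gamma)}\bP_n^{u_n^m(x)}(x,dy)\to \int_E e^{\gamma g(y,\gamma)}\bP_n^{u_n(x)}(x,dy)$ for any bounded $g$ (the integrand $e^{\gamma g}$ is bounded, so variation convergence suffices), these integrals are bounded away from $0$, hence the $\frac{1}{\gamma}\ln(\cdot)$ term converges, and \eqref{ucontc} handles the running cost; for the inductive step one writes $g^m=\tilde{T}_{n+1}^{u_{n+1}^m}\cdots\tilde{T}_{n+k-1}^{u_{n+k-1}^m}0(\cdot,\gamma)$, notes $g^m\to g$ in supremum norm by the inductive hypothesis, and controls $\bigl|\frac{1}{\gamma}\ln\!\int e^{\gamma g^m}d\bP^{u_n^m(x)}-\frac{1}{\gamma}\ln\!\int e^{\gamma g}d\bP^{u_n(x)}\bigr|$ by $\|g^m-g\|$ plus the $k=1$ estimate applied to the fixed function $g$. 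The uniformity in $x$ needed for span-norm convergence comes from \eqref{ucontc} exactly as remarked in the proof of Lemma \ref{impol}. I expect this inductive lemma to be the main obstacle, but it is genuinely routine given the cited Proposition 1 and Proposition 2 of \cite{Ste2024} and the span-norm contractivity already established in Lemma \ref{loccon}, so I would state it as the $\gamma\neq 0$ analogue of Lemma \ref{impol} (either as a separate lemma or inline) and invoke it.

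With $\tilde{b}_n^m\to 0$ as $m\to\infty$ for each fixed $k$, and $\tilde{a}_n^m\vee\tilde{d}_n\to 0$ as $k\to\infty$ uniformly in $m$, a standard $\varepsilon/3$ argument (choose $k$ large to make the outer terms small, then let $m\to\infty$) yields $\|\tilde{w}_n^{u_n^m}(\cdot,\gamma)-\tilde{w}_n^{u_n}(\cdot,\gamma)\|_{sp}\to 0$. Since $\tilde{w}_n^{u_n^m}(\bar{x},\gamma)=\tilde{w}_n^{u_n}(\bar{x},\gamma)=0$, this upgrades to supremum-norm convergence $\|\tilde{w}_n^{u_n^m}(\cdot,\gamma)-\tilde{w}_n^{u_n}(\cdot,\gamma)\|\to 0$. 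Finally, reading off $\lambda_n^\gamma(u_n^m)$ from the Poisson equation \eqref{Pois2} — it equals $c_n(x,u_n^m(x))+\frac{1}{\gamma}\ln\!\int_E e^{\gamma\tilde{w}_{n+1}^{u_{n+1}^m}(y,\gamma)}\bP_n^{u_n^m(x)}(x,dy)-\tilde{w}_n^{u_n^m}(x,\gamma)$ evaluated at any convenient $x$ — the convergence $\tilde{w}_{n+1}^{u_{n+1}^m}\to\tilde{w}_{n+1}^{u_{n+1}}$ together with \eqref{varcont} and \eqref{ucontc} gives $\lambda_n^\gamma(u_n^m)\to\lambda_n^\gamma(u_n)$; and then $J_x^\gamma((u_n^m))=\liminf_{n\to\infty}\frac{1}{n}\sum_{i=0}^{n-1}\lambda_i^\gamma(u_i^m)\to J_x^\gamma((u_n))$ follows from the verification-style identity, using $\sup_n R_n<\infty$ to pass the limit through (the argument is identical to that concluding the preceding theorem). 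This completes the proof.
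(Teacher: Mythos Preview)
Your proposal is correct and follows essentially the same line as the paper's own proof: the paper simply asserts the risk-sensitive analogue of Lemma \ref{impol}, then says to repeat \eqref{os1}--\eqref{os2} with $T_n^{u_n}$ replaced by $\tilde{T}_n^{u_n}$ and to finish via the Poisson equation \eqref{Pois2}. Your write-up is more detailed (you spell out the induction behind the analogue of Lemma \ref{impol} and the $\varepsilon/3$ mechanism), but the structure, the key lemma, and the conclusion are identical.
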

\begin{proof} We have an analog of Lemma \ref{impol}, namely for $n\in \mathbb{N}_0$ and $k\in \mathbb{N}$ and $x\in E$
\begin{equation}
\tilde{T}_n^{u_n^m}\tilde{T}_{n+1}^{u_{n+1}^m}\ldots \tilde{T}_{n+k-1}^{u_{n+k-1}^m}0(x) \to \tilde{T}_n^{u_n}\tilde{T}_{n+1}^{u_{n+1}}\ldots \tilde{T}_{n+k-1}^{u_{n+k-1}}0(x)
\end{equation}
as $m\to \infty$. Then we follow \eqref{os1} and \eqref{os2} with operators $T_n^{u_n^m}$ and $T_n^{u_n}$ replaced by $\tilde{T}_n^{u_n^m}$ and $\tilde{T}_n^{u_n}$ respectively. The remaining part of the proof follows from the Poisson equation \eqref{Pois2}.
\end{proof}

\bibliographystyle{cas-model2-names}

\end{document}